\newtheorem{thm}{Theorem}[section]
\newcommand{\bt}{\begin{thm}}
\newcommand{\et}{\end{thm}}
\newtheorem{ex}[thm]{Example}
\newcommand{\bex}{\begin{ex}}
\newcommand{\enx}{\end{ex}}
\newtheorem{cor}[thm]{Corollary}   
\newcommand{\bc}{\begin{cor}}
\newcommand{\ec}{\end{cor}}
\newtheorem{lem}[thm]{Lemma}  
\newcommand{\bl}{\begin{lem}}
\newcommand{\el}{\end{lem}}
\newtheorem{prop}[thm]{Proposition}
\newcommand{\bp}{\begin{prop}}
\newcommand{\ep}{\end{prop}}
\newtheorem{defn}[thm]{Definition}
\newcommand{\bd}{\begin{defn}}      
\newcommand{\ed}{\end{defn}}
\newtheorem{rmrk}[thm]{Remark}   
\newcommand{\br}{\begin{rmrk}}
\newcommand{\er}{\end{rmrk}}
\newcommand{\be}{\begin{equation}}
 \newcommand{\ee}{\end{equation}}
\newtheorem{step}{Step}
\newtheorem{assp}{{Assumption}}
\newcommand{\thmref}[1]{Theorem~\ref{#1}}
\newcommand{\secref}[1]{Section~\ref{#1}}
\newcommand{\subref}[1]{Subsection~\ref{#1}}
\newcommand{\subsubref}[1]{Subsubsection~\ref{#1}}
\newcommand{\appref}[1]{Appendix~\ref{#1}}
\newcommand{\lemref}[1]{Lemma~\ref{#1}}
\newcommand{\propref}[1]{Proposition~\ref{#1}}
\newcommand{\asspref}[1]{Assumption~\ref{#1}}
\newcommand{\eref}[1]{(\ref{#1})}
\newcommand{\Align}[1]{\begin{align}\begin{split} #1 \end{split}\end{align}}
\newcommand{\Aligns}[1]{\begin{align*}\begin{split} #1 \end{split}\end{align*}}
\newcommand{\EQ}[1]{\begin{equation}\begin{split} #1 \end{split}\end{equation}}
\newcommand{\EQL}[1]{\begin{equation}\end{equation}}
\newcommand{\rt}{\right}
\newcommand{\lt}{\left}
\newcommand{\la}{\langle}
\newcommand{\ra}{\rangle}
\newcommand{\pr}[1]{\left(#1\right)}
\newcommand{\bra}[1]{\left[#1\right]}
\newcommand{\cur}[1]{\left\{#1\right\}}
\newcommand{\abs}[1]{\left|#1\right|}
\newcommand{\rhp}{\rightharpoonup }
\newcommand{\mc}{\mathcal}
\newcommand{\mf}{\mathfrak}
\newcommand{\p}{\partial}
\newcommand{\B}[2]{B_{#1}\pr{#2}}
\newcommand{\supp}[1]{\operatorname{supp}\pr{#1}}
\newcommand{\spt}{\operatorname{spt}}
\newcommand{\N}{\mathbb{N}}
\newcommand{\R}{\mathbb{R}}
\newcommand{\na}{\nabla}
\newcommand{\ze}{\zeta}
\newcommand{\om}{\omega}
\newcommand{\al}{\alpha}
\newcommand{\bet}{\beta}
\newcommand{\del}{\delta}
\newcommand{\eps}{\epsilon}
\newcommand{\lam}{\lambda}
\newcommand{\tom}{\tilde{\omega}}
\newcommand{\tdel}{\tilde{\delta}}
\newcommand{\tal}{\tilde{\alpha}}
\newcommand{\tbet}{\tilde{\beta}}
\newcommand{\tDel}{\tilde{\Delta}}
\newcommand{\vol}{\operatorname{vol}}
\newcommand{\diam}{\operatorname{Diam}}
\newcommand{\diver}{\operatorname{div}}
\newcommand{\dvol}{\operatorname{dvol}}
\newcommand{\inj}{\rm{inj}}
\newcommand{\loc}{\rm{loc}}
\newcommand{\sker}{\mc{S}^k_{\epsilon,r}}
\newcommand{\ske}{\mc{S}^k_{\epsilon}}
\newcommand{\skedr}{\mc{S}^k_{\epsilon,\hat {\delta} r_0}}
\newcommand{\sketdr}{\mc{S}^k_{\epsilon,\tilde {\delta} r_0}}
\newcommand{\nab}{\abs{\nabla u}^2}
\newcommand{\czp}{\mc{C}_{r_0^+}}
\newcommand{\cz}{\mc{C}_{r_0}}
\newcommand{\V}{\frac{1}{\vol\pr{\B {r} {x_0}}}}
\begin{document}
\title[Convex functional ]{Convex functional and the stratification of the singular set  of their stationary points}
\author{Zahra Sinaei}
\begin{abstract}
We prove partial regularity of stationary solutions and minimizers $u$ from a set $\Omega\subset \R^n$ to a Riemannian manifold $N$, 
for the functional $\int_\Omega  F(x,u,\nab) dx$. The integrand $F$ is convex and  satisfies some ellipticity and boundedness assumptions. We also  develop a new monotonicity formula and an $\epsilon$-regularity theorem  for such stationary solutions with no restriction on their images.  We then use the idea of quantitative stratification to show that the k-th strata of the singular set of such solutions are k-rectifiable.
\end{abstract}
\maketitle
\section{\label{introduction}Introduction}
In this paper we develop  the regularity theory for minimizing and stationary points of the energy functional 
\EQ{\label{energy-functional}E(u)=\int_\Omega F(\nab) dx}
or more generally
\EQ{\label{energy-functional2}E(u)=\int_\Omega F(x,i\circ u,\nab) dx}
where 
${u}$ is in Sobolev space of maps ${H^1(\Omega,N)}$, 
 $\Omega$ is an open domain with smooth boundary in $\R^n$, and  $N=N^m$ is a compact, smooth manifold with 
 \EQ{\label{bounded-geometry}
 \partial N=\emptyset, ~\inj(N)>\rho>0,~|\sec_N|<k,~\diam_N<D,}
  isometrically embedded in some Euclidean space, $i:N\hookrightarrow \R^q$.   Abusing  notation in many places in this paper we write $u$ instead of $i\circ u$.  For the purpose of regularity, we assume  the $C^2$ function $F$ 
    \EQ{\label{functionG}F(x,z,\mf p)~\text{with} ~x\in\R^n, z\in\R^q, ~\text{and}~{\mf p}\in \R,}
satisfies some ellipticity and integrability assumptions, i.e.
  \begin{assp}\label{asspG} 
\begin{enumerate}[i.]  
\item For some  $\mc{B}>1$, $F$ satisfies the ellipticity condition
\Aligns{ \mc{B}^{-1} \leq F_{{\mf p}{\mf p}}(x,z,{\mf p})\mf p+\frac{nq}{2}F_{{\mf p}}(x,z,{\mf p}) \leq \mc{B}.
}
\item  $\abs{F_{x_l}(x,z,\mf{p})},~\abs{F_{z_k}(x,z,\mf{p})}< \vartheta \mf{p} $, for some positive
 constant $\vartheta$.
 \item $ F_{{\mf p}{\mf p}}(x,z,{\mf p})\geq 0.$
 \item $F$ satisfies the following integrability  conditions 
\Aligns{ &\int_1^\infty \sup_{x,z} F_{{\mf p}{\mf p}}(x,z,{\mf p})\ln{\mf{p}}~d\mf{p}=\mf{C}<\infty.\\
 &\int_0^1 \mf{p}\sup_{x,z} e(x,z,\mf{p}^{-2}) d\mf{p}=\mf{D}<\infty.}
 \end{enumerate}
\end{assp}
Hereafter we always assume $F$ satisfies \asspref{asspG}.  See also  \secref{special-case} for more explanation on condition i. 
The Euler-Lagrange equation for this energy functional is 
\EQ{\label{weakly-harmonic2}
-\int F_{z_k}(x,u,\nab)\ze^k+\int  F_{\mf p}(x,u,\nab)\bra{\la\na_i u,\na_i \ze\ra- A(u)(\na u,\na u)\ze}=0,
}
where  $F_{z_k}$  denotes the partial derivative with respect to the $k$-th component of $z=(z_1,\ldots,z_q)$ and $F_{\mf p}$ denotes the derivative with respect to the last component of $F(x,z,{\mf p})$.  Considering the variation generated by a compactly supported vector field $\ze$  on $\Omega$, the stationary equation related to this energy functional is 
\EQ{\label{stationary-harmonic2}
-\int F_{x_l}(x,u,\nab)\ze^l+\int  F_{\mf p}(x,u,\nab) \la\na_i u,\na_j u\ra\nabla^i\ze^j -F(x,u,\nab)\diver(\ze)=0,
}
where $ F_{x_l}$ denotes the partial derivative with respect to the $l$-th component of $x=(x_1,\ldots,x_n)$.  We call the weak  solutions of \eref{weakly-harmonic2} and \eref{stationary-harmonic2},  {\it{stationary F-harmonic maps}} and the minimizers of the functional  \eref{energy-functional} and \eref{energy-functional2}, {\it{minimizing F-harmonic maps}}.

The existence and regularity of  minimizing and  stationary $F$-harmonic maps have been considered extensively. For example in \cite{Uhlenbeck77}, Uhlenbeck has shown that  under ellipticity assumption on $F$, the weak solutions to 
 equation \eref{weakly-harmonic2}, when $u$ is a map to $\R$,  is $C^{1,\alpha}$ regular for some $0<\alpha<1$.

In \cite{Giaquinta-Modica79}, the authors have shown that  under   smallness assumption on the image, and ellipticity and growth condition on $F$,  
 weak solutions to \eref{weakly-harmonic2} are H\"older continuous outside a set of finite codimension 2  Hausdorff measure.  See also the book \cite{Giaquinta83} and the references therein for a complete survey on this subject. 

Later Schoen and Uhlenbeck in \cite{Schoen-Uhlenbeck82} have developed the classical theory of harmonic maps when
\EQ{\label{classical}F(\nab)=\nab}
and they  have shown that the $k$-dimensional stratum of the singular set
\Aligns{\mc{S}^k(u)=\cur{x\in \Omega~\lt|\rt.~\text{no tangent map at}~x~\text{is k-symmetric}}}
 of the classical stationary harmonic maps,  i.e.  weak solutions of \eref{weakly-harmonic2} and \eref{stationary-harmonic2} for the functional \eref{classical}, satisfy
\EQ{\label{k-stratum}\dim\pr{\mc{S}^k(u)}\leq k.}
They also showed that the singular set of the classical minimizing harmonic maps satisfy 
\EQ{\label{n-3-finite}\mc{S}^{n-3}(u)=\mc{S}(u)}
where $\mc{S}(u)$ denotes the singular set of the map u,
\EQ{\mc{S}(u)=\{x\in\Omega~\lt|\rt.~\exists r>0~\text{such that}~u\lt|\rt._{\B r x}\text{is H\"older continuous}\}^c.}
This was then extended by Lin in \cite{Lin99} where he used the idea of defect measures to prove inequality 
\eref{k-stratum} for  the stationary harmonic maps.  He also showed that 
\EQ{\label{n-2-finite}\mc{H}^{n-2}(\mc{S}(u))=0.}

In the two latter examples the authors prove a monotonicity formula for $\theta(x,r)=r^{2-n}\int_{\B {r} {x}}{\nab}$,
\Aligns{\frac{d}{dr} \theta(x,r)= 2 r^{2-n}\int_{\p \B r x}\abs{\frac{\p u}{\p r}}^2, }
which shows the scale invariant quantity $\theta(x,r)$ is monotone, and is constant if and only if $u$ is homogenous. This is the main step of the proof of inequality \eref{k-stratum}. The proof of \eref{n-3-finite} and \eref{n-2-finite} are again based on  monotonicity formula and a so called $\epsilon$-regularity theorem \cite{Bethuel93}. 

Recently in \cite{Naber-Valtorta17},  Naber and Valtorta have used the idea of quantitative stratification, which  first appeared in the work of   Almgren \cite{Almgren00}  and was later  developed in \cite{Cheeger-Naber13.1}, \cite{Cheeger-Naber13.2} by Cheeger and Naber, to show that when $u$ is stationary harmonic 
\EQ{\label{rectifiablity}\mc{S}^k(u)~\text{ is k-rectifiable}.} 
They  have further shown 
\EQ{\label{finiteness}\mc{H}^{n-3}(\mc{S}^{n-3}(u)\cap \B 1 0)~\text{ is finite}}
when $u$ is a minimizing harmonic map.  

The goal of this paper  is to generalize the  results above for minimizing F-harmonic maps and stationary F-harmonic maps.
A crucial ingredient in the proof of these results was a suitable monotonicity formula. The analogous results could not be extended to stationary solutions and minimizers of the more general functional \eref{energy-functional2} due to the absence of monotonicity formula.
Furthermore, there is no $\eps$-regularity type theorem in this context and for a general target manifold $N$. 

As a crucial  first step  for  proving a regularity result  we obtain   the following monotonicity formula for stationary F-harmonic maps,\EQ{\label{monotonicity2}
\frac{d}{dr}\pr{
e^{\frac{\vartheta}{c_e}r}r^{2-n}\int_{\B {r}{x_0}}F(x,u,\nab) dx+h(r)
}
\geq\int_{\p B_r(x)}F_{\mf p}(x,u,\nab)|\frac{\p u}{\p r}|^2 
}
where $c_e={nq\mc{B}}/{2}$ and $\vartheta$ is a constant depending on $F$. Here $h$ is a positive monotone function with $\lim_{r\to 0} h(r)=0$  which will be defined explicitly in terms of $F$ in \thmref{monotonicity-thmm}.   For the proof of \eref{monotonicity2} we prove a  Jensen-type inequality for functions with positive first derivatives. 

The $\epsilon$-regularity theorem for classical stationary harmonic maps  in \cite{Bethuel93}  says that if  $\theta(x,r)$ is small enough for some positive number $r$, then $u$ is smooth on the ball $\B {r/2} {x}$. By use of similar techniques as in  \cite{Bethuel93}, we  prove the following $\eps$-regularity result. Define 
 \Aligns{\bar\Theta(x_0,r)= {
e^{\frac{\vartheta}{c_e}r}r^{2-n} \int_{\B {r}{x_0}}F(x,u,\nab)~ dx+h(r).
}}
We have
 \bt\label{epsilon-regularityG}
 There exist $\epsilon_0, \alpha \geq 0$ depending only on $n$, $N$ and $F$, such that if  $u\in H^1(B_r(x_0),N)$ is a  stationary F-harmonic map with 
\Aligns{\bar{\Theta}(x_0,r)\leq \epsilon_0,}
then $u$ is in $C^{0,\alpha}(\B {\frac{r}{2}}{x_0})$ with $\abs{u}_{C^{0,\alpha}}\leq C(n,N,F)$.
\et
As  a corollary we show that there exist $\epsilon_0, \alpha, r_0> 0$ depending only on $n$, $N$ and $F$, such that if $u$ is any stationary F-harmonic map  with  
$${r^{2-n} \int_{\B {r}{x_0}}F(x,u,\nab)\leq \epsilon_0,}$$
for some $0<r<r_0$, then $u$ is $C^{0,\alpha}(\B {\frac{r}{2}}{x_0})$. 

Note that in \thmref{epsilon-regularityG}  we only assume $N$ satisfy  \eref{bounded-geometry} and we do not consider any smallness assumption on the image of the map $u$. By a simple covering argument and \thmref{epsilon-regularityG} we get  
$$\mc{H}^{n-2}(\mc{S}(u))=0.$$
Moreover, the monotonicity formula \eref{monotonicity-thmm} and \thmref{epsilon-regularityG} enable us to generalize \eref{rectifiablity} for stationary F-harmonic maps. 
More precisely, for a map $u:\B 3 0\subset\Omega\to N$ with 
\Align{\label{properties-u} u\in H^1(\B 3 0,N),~\text{and}~\bar\Theta_u(0,3)\leq \Lambda.
}
We prove the following result.
\bt\label{mainG} Let $u$ be a stationary F-harmonic map. For every $\eps>0$ there exists, $C_\eps(n,N,\Lambda,F)$ such that for all $0<r\leq 1$ we have 
\EQ{\label{Minkowski1}\vol \pr{   \B {r}{\sker(u)}\cap \B 1 0} \leq C_\eps r^{n-k}.}
Similarly for $\ske$ we have 
\EQ{\label{Minkowski2}\vol \pr{   \B {r}{\ske(u)} \cap \B 1 0 } \leq C_\eps r^{n-k}.}
In particular, $\mc{H}^k\pr {\ske(u)}<C_\eps$.  We also have 
\EQ{\label{Rectifiable1}\ske~ \mbox{is k-rectifiable. }}
As a corollary
\EQ{\label{Rectifiable2}\mc{S}^k~\mbox{is k-rectifiable.}}
\et
Here $\sker(u)$ and  $\ske(u)$ denote the k-th quantitative strata which classify  points on the domain based on 
$L^2$-closeness of the map $u$ to a k-symmetric map in the balls of certain size  around them. 
See \subref{quantitative-singular} for the exact definitions.   For the proof of the above  theorem we follow a similar argument  as in \cite{Naber-Valtorta16}, which uses a simpler covering argument  compared with \cite{Naber-Valtorta17}. Having \thmref{mainG} in hand and by proving a quantitative version of \thmref{epsilon-regularityG}, one can conclude \eref{finiteness} for  minimizing F-harmonic maps and  prove the following theorem.
\bt\label{minimizingG}
 Let $u$ be as in \eref{properties-u} and  be a minimizing F-harmonic map.   Then $\mc{S}(u)$ is $(n-3)$-rectifiable and there exists $C(n,N,\Lambda,F)$ such that
\Aligns{\vol\pr{\B {r}{\mc{S}(u)}\cap \B 1 0}<C r^3.}
Consequently, $\mc{H}^{n-3}(\mc{S}(u)\cap \B 1 0)\leq C$.
\et
We should mention here the results of this paper can be extended to maps from a Riemannian manifold $M$ into a Reimannian manifold $N$, for $N$ as above and where  $M$ satisfies $\inj_M>\rho>0$  and  $|\sec_M |<K_M$.

The organization of this paper is as follows. In  \secref{special-case} we consider the functional \eref{energy-functional} and in \secref{general-case} we generalize the results proven in \secref{special-case}, for  the functional   \eref{energy-functional2}. More precisely
in \subref{monotonicity-special}, we prove a monotonicity formula, \thmref{monotonicity-thm},  which we generalize in \subref{section-monotoneG} for the general functional \eref{energy-functional2}.  \subref{e-regularityS} is where we prove  \thmref{epsilon-regularityG} for \eref{energy-functional}. We adjust this proof for  
\eref{energy-functional2}   in \subref{e-regularityG}. \subref{compactnessS} is devoted to the proof of a compactness result for  solutions of \eref{weakly-harmonic2} and  \eref{stationary-harmonic2},  for the functional \eref{energy-functional} (see \propref{Fatou}) and some properties of tangent maps in this context (see \lemref{tg-map}). We generalize these results in \subref{compactnessG} for the  functional \eref{energy-functional2}.  Finally  we prove   \thmref{mainG} and \thmref{minimizingG} in \secref{stratification}. The proof of \thmref{mainG} requires three additional ingredients. 1. The $L^2$-approximation theorem, \thmref{L2-approximation}, which relates the $\beta$-Jones' number and the average of pinches of the monotone quantity $\bar{\Theta}(x,r)$ on a ball. 2.  Rectifiable-Reifenberg theorems, \thmref{ReifenbergI} and \thmref{ReifenbergII}, which are  generalizations of original Rectifiable-Reifenberg result \cite{Reifenberg60}. 3. Two covering lemmas, \lemref{covering1} and \lemref{covering2}.  Since the proof of these ingredients are similar to the analogous results for  harmonic maps \cite{Naber-Valtorta17} and approximate harmonic maps \cite{Naber-Valtorta16}, we discuss them in  \appref{appendix}.
\section*{Acknowledgments}
I sincerely thank professor Aaron Naber for encouraging me to work on this problem and for many insightful conversations.
\section{\label{special-case}{Special case} $F(\nab)$} 
In this section we consider the energy functional 
\Aligns{
E(u)=\int_\Omega G(\na u)=\int_\Omega F(\nab)
}
on $H^1(\Omega,N)$. The Euler-Lagrange equation with respect to this energy functional is 
\EQ{
\int  F'(\nab)[\la\na_i u,\na_i \ze\ra- A(u)(\na u,\na u)\ze]=0,
}
or equivalently 
\EQ{\label{weak-harmonic}
\diver( F'(\nab) \na u)-F'(\nab) A(u)(\na u,\na u)=0 \quad\text{ in the weak sense}.
}
One can find the stationary points for $E$ considering the variation on $\Omega$ 
\Aligns{\frac{d}{dt}|_{t=0} E(u\circ \phi_t)=
\frac{d}{dt}\lt|\rt._{t=0}E_{(\phi_t)_*g} (u))=0
}
where $\phi_t$ is the flow generated by a compactly supported vector field $X$ and $g$ is the Euclidean metric on $\Omega$. This will reduce to 
\Aligns{\int \bra {F(\nab) g_{\alpha\beta}- 2F'(\nab)(u^*h)_{\alpha\beta} }(\mathrm{L}_X  g^{-1})^{\alpha\beta}.} 
So the stress energy tensor for this equation is 
\Aligns{
S_{\alpha\beta}=F({\nab}) g_{\alpha\beta}- 2F'({\nab})(u^*h)_{\alpha\beta}.
}
$S$ is divergence free
\EQ{
\na^{\alpha }S_{\alpha\beta}=0 \quad\text{in the  distributional sense.}
}
Therefore the stationary equation for the energy functional \eref{energy-functional} is 
\EQ{\label{stationary-harmonic}
\na^{\alpha }\pr{F(\nab) g_{\alpha\beta}- 2F'(\nab)(u^*h)_{~\alpha\beta}}=0 \quad\text{in the distributional sense}
}
or
\EQ{\label{stationary-harmonic1}
\int{F(\nab) \diver(X)- 2F'(\nab)(u^*h)_{~\alpha\beta} \na ^\al X^\bet}=0,
}
for any compactly supported smooth vector field $X$ on $\Omega$. Note that the weak solutions of \eref{weak-harmonic} and \eref{stationary-harmonic} are the stationary $F$-harmonic maps for the functional \eref{energy-functional}.

Without loss of generality we can assume $0 \in\Omega$ and $\B r 0\subset \Omega$ for some $r>0$. Define the vector field $X$ as follows: let $\eta_\eps(\abs{x})$ be a compactly supported smooth function on $\B r 0$ with $\eta_\eps(\abs{x})\equiv 1$  for $x\in \B {r(1-\eps)}{0}$. Then we define $X(x)=\eta_\eps(\abs{x}) x$.  By replacing this vector field in (\ref{stationary-harmonic1}) and sending $\epsilon$ to 0 we have 
\EQ{\label{monotonicity1}
\frac{d}{dr}\pr{ r^{2-n}\int_{\B r 0}F(\nab )}+2r^{1-n}\int_{\B r 0} e(\nab)=2r^{2-n}\int_{\partial \B r 0 }F'(\nab)|\frac{\partial u}{\partial r}|^2,
}
where $e(x)=F'(x)x-F(x)$. We refer to $e$  as the error term.  Note that to obtain the above equation we have not used any assumption on $F$. 
\subsubsection*{\label{F&e1}Properties of $G$ and $F$}  As we mentioned in introduction, we assume some ellipticity  and boundedness assumptions on $F$. Indeed if we assume the integrand $G$ satisfies the following strong ellipticity and boundedness condition,
\EQ{
4\mc{B}^{-1}\abs{\ze}^2 \leq G_{p_{i}^\al p_j^\bet}(p) \ze_i^\al\ze_j^\bet \leq 4\mc{B}\abs{\ze}^2 \quad \text{for all}~ \ze\in M^{n\times q},
}
where $M^{n\times q}$ denotes the space of all real valued matrices,  we  have 
\Aligns{
G_{p_i^\al}(p)&=2F'(\abs{p}^2)p_i^\al,\\
G_{{p_i^\al}{p_j^\bet}}(p)&=4F''(\abs{p}^2)p_i^\al p_j^\bet+2F'(\abs{p}^2)\delta_{ij}\delta_{\al\bet}.
}
By considering $\ze$ the unit vector in $M^{n\times q}$ we have
\EQ{ \label{second-derivative1}
\mc{B}^{-1} \leq F''(x)x+\frac{nq}{2}F'(x) \leq \mc{B}.
}
which is equivalent to the condition i in \asspref{asspG} for the functional  \eref{energy-functional}.
Note that 
\Aligns{ F''(x)x+nq\frac{F'(x)}{2}={x}^{1-\frac{nq}{2}}(x^{\frac{nq}{2}}F'(x))'
}
and therefore 
\Aligns{
\mc{B}^{-1} {x}^{\frac{nq}{2}-1} \leq    (x^{\frac{nq}{2}}F'(x))' \leq \mc{B}  {x}^{\frac{nq}{2}-1}.
}
By integrating the above inequality 
\EQ{ \label{inequaltiy1}\frac{2\mc{B}^{-1}}{nq}\leq F'(x) \leq \frac{2\mc{B}}{nq}} 
and so
\EQ{\label{inequaltiy2}\frac{\mc{B}^{-1}-\mc{B}}{nq}\leq F''(x)x\leq \frac{{\mc{B}-\mc{B}^{-1}}}{nq}.}
Concerning the error term $e$, we have 
and $e$ satisfies the following properies on $[0,\infty)$
\begin{enumerate}[i.]
\item $e(x)$ is bounded for $x<C$.
\item $\displaystyle{\lim_{x\to0}}{e'(x)}=\displaystyle{\lim_{x\to\infty}}{e'(x)}=0$.
 \item $\frac{\mc{B}^{-1}-\mc{B}}{nq}<e'(x)<\frac{\mc{B}-\mc{B}^{-1}}{nq}$.
 \item $e'(x)\geq 0$ if and only if  $F''(x)\geq 0$.
\end{enumerate}
\subsection{\label{monotonicity-special}Monotonicity formula for  the special case}
In this subsection we obtain a monotonicity formula which is  the key for our regularity  theorem. We first recall \asspref{asspG} for the functional  \eref{energy-functional} and through out this section we always assume $F$ satisfies the followings.
\begin{assp}\label{asspS}
\begin{enumerate}[i.]
\item  $F$ satisfies the ellipticity condition \eref{second-derivative1}.
\item  The second derivative of $F$ is non-negative 
\Aligns{F''(x)\geq 0\quad  \text{on}~[0,\infty).}
\item  $F$ satisfies the following integrability condition  
\Aligns{\int_1^\infty  F''(t)\ln(t) dt=\mf{C}<\infty.}
\end{enumerate}
\end{assp}
Before we state our main monotonicity theorem we prove the following lemma which is  crucial ingredients in the proof of this theorem. 
\bl\label{Jensen}
Let  $g(x)$ be a positive function with $g'(x)\geq 0 $ for all $x\geq 0$. Then
\begin{enumerate}[a.]
\item $g$ satisfies the following Jensen-type inequality
\Aligns{\fint_{\B {r} {x_0}} g( f(y)) dy \leq J_g\pr{\fint_{\B {r} {x_0}} f (y)dy}}
for any non-negative $f\in L^1(\Omega)$ and $\B {r} {x_0}\subset \Omega\subset \R^n$, where the function $J_g$ is 
\Aligns{J_g(x)=g(x)+x \int_{x}^{\infty}  \frac{g'(t)}{t}dt.
}
\item If  $\int_1^\infty \frac{g'(t)}{t}\ln(t)<\infty$, then 
\Aligns{\int _0^1 x^{-1} \int_{x^{-2}}^{\infty}  \frac{g'(t)}{t}dt dx<\infty.}
\end{enumerate}
\el
\begin{proof}
To prove part a, for every $y\in \B {r}{x_0}$ we have 
 \EQ{\label{fundamental}g(f(y))-g(\bar{f})=\int_{\bar{f}}^{f(y)} g'(t) dt}
where $\bar{f}=\fint_{\B {r} {x_0}} f(x)dx=\V \int f(x)dx $.  Define the set 
\Aligns{&U=\cur{y \in \B {r} {x_0}~\lt |\rt.~  f(y)\geq \fint f}\\
& U_t=\cur{y \in \B {r} {x_0}~\lt |\rt.~  f(y)\geq t}\\
&U^c=\B {r} {x_0}\backslash U.
}
By averaging  \eref{fundamental} over the ball $\B {r} {x_0}$, 
\Aligns{\fint_{\B{r}{x_0}} g(f) -g(\bar{f})&= \fint _{\B {r} {x_0}}\int_{\bar{f}}^{f(y)} g'(t) dt dy\\
&=\V \int _{U}  \int_{\bar{f}}^{f(y)} g'(t) dt dy-\V \int _{U^c}  \int^{\bar{f}}_{f(y)} g'(t) dt dy\\
&\leq  \V \int _{U}  \int_{\bar{f}}^{f(y)} g'(t) dt dy\\
&= \V \int_{\bar{f}}^{\infty} g'(t) \int_{U_t}  dy ~dt\\
&\leq \V \int_{\bar{f}}^{\infty} g'(t) \int_{U_t}  \frac{f(y)}{t} dy ~dt\\
&\leq  \V \int_{\bar{f}}^{\infty}  \frac{g'(t)}{t} \int_{\B r x}  f(y)dy ~dt= \bar{f} \int_{\bar{f}}^{\infty}  \frac{g'(t)}{t}dt.\\
}
Therefore 
\Aligns{\fint g(f) \leq J(\bar{f}) .
}
For part b we have
\Aligns{\int_0^1 x^{-1} \int_{x^{-2}}^{\infty}  \frac{g'(t)}{t}dt=\int_1^\infty \frac{g'(t)}{t} \int_{t^{-\frac{1}{2}}}^1 \frac{dx}{x} dt\\
= \frac{1}{2}\int_1^\infty \frac{g'(t)}{t}\ln(t)<\infty.
}
\end{proof}
\br
By \asspref{asspS} and  \lemref{Jensen} we have 
\Aligns{\fint_{\B {r} {x_0}} e( f(y)) dy \leq J_e\pr{\fint_{\B {r} {x_0}} f (y)dy}.}
Further, since $\int_0^1 xe(x^{-2})dx=-2x^2F(x^{-2})\lt|\rt._0^1<\infty$ we have 
\Aligns{\int_0^1 xJ_e(x^{-2}) dx<\infty.}
\er
Now we are able to state and prove our monotonicity formula.
\bt\label{monotonicity-thm}
Let $u$ be a stationary F-harmonic map in $H^1(\Omega, N)$ for the functional \eref{energy-functional}. Then there exists $A=A(n,N,\mc{B},\mf{C})$ such that
\EQ{\label{monotonicityf}
\frac{d}{dr}\pr{
r^2 \mathcal{E}(r)+h(r)
}
\geq\int_{\p B_r(x)}{F}'(\nab)|\frac{\p u}{\p r}|^2 
}
where $\mc{E}(r)=\fint_{\B {r}{x_0}}F(\nab) dx$ and $h(r)=2\int_{ 0}^r t J_e(2c_eA^2t^{-2}) dt$ with $c_e={nq\mc{B}}/{2}$. 
\et
\begin{proof}
Recall that we have  
\EQ{\label{monotonicity11}
\frac{d}{dr}\pr{ r^{2}\fint_{\B r x}F(\nab )}+2r J\pr{c_e \fint_{\B r x}F({\nab})}&\geq 
\frac{d}{dr}\pr{ r^{2}\fint_{\B r x}F(\nab )}+2r \fint_{\B r x} e(\nab)\\&=2r^{2-n}\int_{\partial \B r x }F'(\nab)|\frac{\partial u}{\partial r}|^2.
}
where $c_e={nq\mc{B}}/{2}$.
First we claim that $r^2\mc{E}(r)$ is bounded. To prove our claim we will use the following argument.
Let  
\begin{align*}&\hspace{3cm}\mc{E}(1)\leq A^2~~\text{and}\\
&r_0=r_0(A)~\text{ be the smallest}~ r~\text{s.t.} ~r^2\mc{E}(r)\leq 2A^2~\text{on}~[r_0,1].
\end{align*}
We show that there exists $A$ such that $r_0=0$.  By  \eref{monotonicity11}  and for $\bar{r}\in[r_0,1]$ we have  
\EQ{\label{bounded}
\int_{\bar{r}}^1\bra{\frac{d}{dr}\pr{ r^2\mc{E}(r)}+2r J_e(2c_eA^2r^{-2})}\geq 0.}
Put $\alpha=\sqrt{2c_eA^2}$. We have 
\Aligns{ \int_{\bar{r}}^1  rJ_e(\al^2r^{-2}) dr=\al^2 \int _{\bar{r}/\al}^{1/\al} s J_e(s^{-2})ds.
}
Choose $\epsilon\ll 1$. Since 
\Aligns{ \int _0^1 s J_e(s^{-2})ds<\infty,
}
 for $A$ large enough 
\Aligns{ \int _{\bar{r}/\al}^{1/\al} s J_e(s^{-2})ds<\eps/2
}
and therefore
\Aligns{
\int_{\bar r}^1 2r J_e(2c_eA^2r^{-2})<\eps \al^2\ll A^2.
}
Finally by \eref{bounded} we have 
\Aligns{\bar{r}^2\mc{E}(\bar{r})\leq \mc{E}(1)+ \int_{\bar{r}} ^12r J_e(2c_eA^2r^{-2})<2A^2.}
and since $r^2\mc{E}(r)$ is continuous on $(0,1]$,  for $r_0-\del\leq  r\leq r_0$, for some small $\del>0$, we have $r^2\mc{E}(r)\leq 2A^2$ which contradicts  the fact that $r_0$ is the smallest such $r$. Assume $A$ is chosen such that $r_0(A)=0$, then for such $A$, we have
\Aligns{
\frac{d}{dr}\pr{ r^2\mc{E}(r)+h(r)}
={\frac{d}{dr}\pr{ r^2\mc{E}(r)}+2r J_e(2c_eA^2r^{-2})}\geq  \int_{\p B_r(x)}{F}'(\nab)|\frac{\p u}{\p r}|^2. 
}
Note that $\lim_{r\to 0} h(r)=0$ since $\int_0^1 r J_e(2c_eA^2r^{-2})<\infty$.
\end{proof}
\br
An example of a functional which satisfies \asspref{asspS} is 
\Align{ F_1(x)=x\pr{2- \frac{1}{\pr{x+1}^\bet}}}
for $\bet<1$.  
\er
\subsection{\label{e-regularityS}$\epsilon$-regularity theorem for the special case} In this subsection we prove the  $\epsilon$-regularity theorem, \thmref{epsilon-regularityG} for the functional \eref{energy-functional}.  We restate this theorem for this case. Set
\EQ{
\theta(x_0,r)&=r^{2-n}\int_{B_r(x_0)}F(\nab) dx, \\
\Theta(x_0,r)&=\theta(x_0,r)+ h(r).
}
\bt\label{epsilon-regularityS}
There exist $\epsilon_0, \alpha \geq 0$ depending only on $n$, $N$ and $F$ such that if $u\in H^1(B_r(x_0),N)$ is a stationary F-harmonic map for the functional \eref{energy-functional} with 
\Aligns{\label{theta} \Theta(x_0,r)\leq \epsilon_0}
then $u$ is in $C^{0,\alpha}(B_{\frac{r}{2}}(x_0))$ with $\abs{u}_{C^{0,\alpha}}\leq C(n,N,F)$.
\et
Here $\abs{u}_{C^{0,\alpha}(\B {{r}/{2}}{x_0})}=\sup_{x,y\in \B {r/2} {x_0}}\frac{\abs{u(x)-u(y)}}{\abs{x-y}^\al}$.
Before we prove \thmref{epsilon-regularityS},  we  recall some background material which we need for the  proof.
\subsubsection{Background}
Let $\Omega\subset \R^n$ be an open domain with smooth boundary. Let $\phi$ be a positive $L^2$ function on $\Omega$. We will  briefly review Hodge theory on the space $\pr{\Omega, g, \phi dx}$ where $g$  denotes the Euclidean metric, and the Hardy and BMO spaces with respect to the measure $\phi dx$.
\subsubsection*{Hodge theory  on $\pr{\Omega, g, \phi dx}$ }
Let $X$ be a  smooth  vector field on $\Omega$. Then
\Aligns{\int_\Omega\diver(\phi X)dx=\int_\Omega \diver (X)\phi dx+\int_\Omega \la\na\ln\phi,X\ra\phi dx}
We define 
 \Aligns{\diver_ \phi(X)= \diver (X)+\la\na\ln\phi,X\ra=\frac{1}{\phi}\diver(\phi X ).}
 In a similar way,  we define the adjoint operator $\tdel=\del_\phi$ of differential operator $d$ with respect to the measure $\phi dx$ by
 \Aligns{&\int_\Omega \la d\al, \bet\ra\phi dx= \int_\Omega \la \al,\del(\phi \bet)\ra dx\\
  &= \int_\Omega \la \al, \del \bet\ra \phi dx+\int_\Omega \la \al, i_{\na\ln \phi} \bet\ra \phi dx\\
 &=\int_\Omega \la \al,\tdel \beta \ra\phi dx
 }
 where $\tdel\bet= \del\bet+i_{\na\ln \phi} \bet$. 
 \subsubsection*{Hardy and BMO spaces} There is a vast literature on  analysis on spaces of homogeneous type, including Euclidean spaces with doubling measure. These spaces arise  in harmonic analysis in the study of  Hardy-Littlewood maximal functions and Hardy spaces, and duality of Hardy  and BMO spaces (see \cite{Coifman-Weiss71,Coifman-Weiss77}). Many properties of  the classical BMO spase have been shown to hold for doubling metric spaces. These include the  Calder\'on-Zygmund decomposition, the John-Nirenberg inequality (see \cite{Buckley99}).
     
The Hardy  and BMO spaces  on $\R^n$ with respect to the doubling measure $\phi dx$  are defined in a similar way to their original definition, but instead of the Lebesgue measure on $\R^n$, the measure $\phi dx$ is used. We use the notation $\mc{H}^1_\phi$ and $\mathrm{BMO}_\phi$ to distinguish them with their original counterpart.   In this context we also have the following theorems where $\phi dx$ is  a doubling measure.
 \bt\label{Duality}
 Suppose $u \in L^\infty$ and $v\in \mc{H}^1_\phi(\R^n)\cap L^1_\phi(\R^n)$. Then 
 \EQ{
 \int_{\R^n}uv \phi dx\leq C[u]_{\mathrm{BMO}_\phi(\R^n)}\abs{v}_{\mc{H}^1_\phi(\R^n)}.
 }
 \et
\bt\label{Divergence-Free}
Let $f$ be in $H^1_\phi(\R^n)$ and $\om$ be a 1-form  in $L^2_\phi(\R^n)$. Let $\tdel \om=0$ in the distributional sense. Then the function $v=df\cdot\om$ is in the Hardy space $\mathcal{H}^1_\phi(\R^n)$. Moreover, there exists a constant  $C_0$ depending only on $n$ such that  
\EQ{\abs{v}_{\mc{H}^1_\phi(\R^n)}\leq C_0\abs{\om}_{L^2_\phi(\R^n)}\abs{df}_{L^2_\phi(\R^n)}.}
\et
By $L^p_\phi$ and $H^1_\phi$ we mean $L^p$ and $H^1$ spaces with respect to measure $\phi dx$. The proof of the above two theorems follows from the proof of their original counterparts with the Lebesgue measure.
\subsubsection{\label{epsilon-regularitySP}Proof of the \thmref{epsilon-regularityS}}
In this part we prove \thmref{epsilon-regularityS}. The proof is very similar to the proof of original $\epsilon$-regularity theorem for stationary harmonic maps as in \cite{ Bethuel93} (see also \cite{Moser05}). Without loss of generality we assume  $\B 1 0\subset  \Omega$. We denote the inner product on $\B 1 0$  and the space of 1-forms on $\R^n$ by $\cdot$ and the inner product on $N$ by $\la ~,~\ra$.  

By a  similar argument to the one used  in \cite{Helein90, Bethuel93} we can show there exists an orthonormal tangent frame field $\{e_1\circ u,\ldots, e_m\circ u\}$ along the map $u$ which minimizes 
\Aligns{
\frac{1}{2}\sum_{i=1}^m\int_{B_r(x_0)} |\nabla e_i|^2\phi dx
}
where $\phi=F'(\nab)$ and $\B {r}{ x_0}\subset\B 1 0$. Such a minimizer satisfies the following Euler-Lagrange equation in the weak sense 
\EQ{
\diver(\la\na e_i,e_j\ra\phi)=0 \quad \mbox{in }~B_r(x_0),
}
with the Neumann boundary condition 
\Aligns{\la e_i,\frac{\p e_j}{\p \nu}\ra=0.
}
Furthermore, the minimizer satisfies 
\EQ{\label{minimum}
\sum_{i=1}^k\int_{B_r(x_0)}|\na  e_i|^2\phi dx\leq C\int_{B_r(x_0)}\nab\phi dx.
}
Define 
\EQ{\gamma_{ij}=\begin{cases}
\la\na e_i,e_j\ra&\quad x\in B_r(x_0)\\
0&\quad x\notin B_r(x_0).
\end{cases}
}
Then for the 1-form  $\gamma_{ij}$ we have  
\EQ{\label{gamma-df}\diver_\phi \gamma_{ij}=0.}
Since $u$ satisfies (\ref{weak-harmonic}) then for the  1-form $\om_i= \la d  u,e_i \ra $ we have 
\Aligns{
\diver (\phi\om_i)&=\la \diver(\phi\na u),e_i\ra+\phi \la \na u,\na e_i\ra\\
&=\phi \la \na u,e_j \ra \cdot\gamma_{ij}=\phi \om_j\cdot\gamma_{ij}
}
and therefore 
\EQ{
\tilde{\delta} \om_i=\om_j\cdot\gamma_{ij},\\
d \om_i=\om_j\wedge \gamma_{ij}.
}
The following lemma is the main step in the proof of \thmref{epsilon-regularityS}.
\bl\label{IterationS}
There exists a constant $C$ depending on $n$, $N$ and $F$ such that  the following holds. Suppose $u\in H^1(B_r(x_0),N)$
satisfies equation (\ref{weak-harmonic}) with 
\Aligns{r^{2-n} \int_{B_{r}(x_0)}F'(\nab)\nab\leq \epsilon.}
Then
\Aligns{(\kappa r)^{1-n}\int_{B_{\kappa r}(x_0)} |\na u|F'(\nab)  dx
&\leq C\kappa^{1-n}[u]_{\mathrm{BMO}_\phi(B_r(x_0))} (\epsilon+C\sqrt{\epsilon})\\
&+C \kappa r^{1-n}\int_{B_{r}(x_0)} \abs{\na u}F'(\nab) ~dx
}
for any $\kappa\in (0,1)$.
\el
\begin{proof}Consider a compactly supported cut-off function $\eta\in C_{0}^{\infty}(B_r(x_0))$ satisfying $\eta\equiv 1$ in $B_{r/2}(x_0)$ and 
$0\leq \eta\leq 1$ in $B_r(x_0)$, such that $|\na \eta|\leq \frac{4}{r}$. We apply the
Hodge decomposition theorem to  
$$\tom_i=\la d( \eta(u-\bar{u})),e_i\ra,$$
 where $\bar{u}=\fint_{B_r(x_0)} u$, with respect to the measure 
$\phi dx$. Therefore, there exist  $\al_i$ and $\bet_i$ such that
\Aligns{\tom_i=\al_i+\bet_i
}
 where 
 \Aligns{d\al_i=\tdel \bet_i=0}
 and 
\Aligns{\sum _{i=1}^k\int_{\R^n} |\alpha_i|^2+|\beta_i|^2 ~\phi dx\leq C\int _{B_r(x_0)}  |\nabla u|^2~ \phi dx}
 and on $B_{r/2}(x_0)$
 \Aligns{|\nabla u|\leq C\sum_{i=1}^k |\al_i|+|\bet_i|.}
 Further, on  $B_{r/2}(x_0)$ we have 
 \Aligns{\tdel \al_i=\tdel \tom_i=\om_j\cdot\gamma_{ij},\\
 d\bet_i=d\om_i= \om_j\wedge \gamma_{ij}.
 }
 Note that $\alpha_i=d\tal_i$ and $\bet_i=\tdel \tbet_i$ and therefore 
  \Aligns{\tDel \tal_i=\om_j\cdot\gamma_{ij},\\
 \tDel \tbet_i= \om_j\wedge \gamma_{ij}.
 }
 We also have 
 \Aligns{
 \int_{\R^n}\alpha_i\cdot\beta_i ~\phi dx=0.
 }
 \subsubsection*{Estimate for $\bet_i$ } We have
 \EQ{\label{Beta-Control}
 &\int_{\R^n} |\beta_i|^2 ~\phi dx=\int \beta_i\cdot \tom_i ~\phi dx=\int \bet_i\cdot \la d( \eta(u-\bar{u})),e_i\ra~\phi dx\\
& =\int \la \eta(u-\bar{u}) ,\tdel (\bet_i\otimes e_i)\ra~\phi dx
 =\int  \la \eta(u-\bar{u}) ,\bet_i\cdot de_i \ra~\phi dx\\
&  =\int \bet_i\cdot \gamma_{ij} \la \eta(u-\bar{u}) ,e_j \ra~\phi dx
  \leq  C[u]_{\mathrm{BMO}_\phi(B_1(0))}\abs{\beta_i\cdot\gamma_{ij}}_{\mc{H}^1_\phi(\R^n)}\\
&  \leq C[u]_{\mathrm{BMO}_\phi(B_1(0))}  \abs{\beta_i}_{L^2_\phi(\R^n)}\abs{\na e_i}_{L^2_\phi(\R^n)}.
  }
The two last inequalities on the right hand side of \eref{Beta-Control} will  follow  from\thmref{Divergence-Free}, \thmref{Duality},   and the fact that $  [\eta(u-\bar{u})]_{\mathrm{BMO}_\phi(\R^n)}\leq C_1 [u]_{\mathrm{BMO}_\phi(B_1(0))}$, where $C_1$  depends only on $m$ and $n$.

We extend $e_i$ to $\R^n$ such that 
\EQ{
\abs{\na  e_i}_{L^2_\phi(\R^n)} \leq C\int_{B_r(x_0)}\nab~ \phi dx.
}\label{Min}
 Consider now a new cut-off function $\ze\in C_0^\infty(B_{\frac{r}{2}}(x_0))$, $0\leq\ze\leq 1$ with $\ze\equiv 1$ on $B_{\frac{r}{4}}(x_0)$ such that 
 $|\na\ze|\leq \frac{8}{r}$. Then we have 
 \EQ{\label{bet}
 \int_{\R^n}\ze|\bet_i| ~\phi dx&\leq \left(\int_{\R^n} \ze^2~\phi dx\right)^{1/2}\left( \int_{\R^n}|\bet_i|^2~ \phi dx\right)^{1/2}\\
 &\leq C\left(\frac{r}{2}\right)^{n/2}\left( \int_{\R^n}|\bet_i|^2~ \phi dx\right)^{1/2}\\
 &\leq C\left(\frac{r}{2}\right)^{n/2}[u]_{\mathrm{BMO}_\phi(B_{r/2}(x_0))}\left( \int_{B_r(x_0)}\nab~ \phi dx\right)^{1/2},
 }
 where $C$ depends on the upper bound for $F'$ and $n$. 
 \subsubsection*{Estimate for $\al_i$}
 Recall that $\alpha_i=d\tal_i$. Decompose 
\Aligns{\tal_i=\tal_i^1+\tal_i^2,} where 
 \EQ{\begin{cases}
 \tDel \tal_i^1=0& \text{in}~B_{{r}/{2}}(x_0)\\
 \tal_i^1=\tal_i &\text{on}~ \partial B_{{r}/{2}}(x_0)
\end{cases}, }
 and 
 \EQ{\begin{cases}
 \tDel \tal_i^2=\tDel \tal_i=\om_j\cdot\gamma_{ij}& \text{in}~B_{{r}/{2}}(x_0)\\
 \tal_i^2=0 &\text{on}~ \partial B_{{r}/{2}}(x_0).
\end{cases} }
We  estimate first $d\tal_i^2$ as follows
\EQ{\label{tal-i}\int_{B_{r/2}(x_0)} |\na \tal^2_i|~\phi dx\leq \int_{B_{r/2}(x_0)} \na \tal^2_i\cdot \frac{\na \tal_i^2}{|\na \tal_i^2|}~\phi dx\\
\leq \int_{B_{r/2}(x_0)}   \tal^2_i\diver_\phi( \frac{\na \tal_i^2}{|\na \tal_i^2|})~\phi dx.
}
Let  $\psi_i$  be the solution to 
\EQ{\begin{cases}
\tDel \psi_i=\diver_\phi (\frac{\na \tal_i^2}{|\na \tal_i^2|})& \text{in}~B_{{r}/{2}}(x_0)\\
 \psi_i=0 &\text{on}~ \partial B_{{r}/{2}}(x_0).
\end{cases}}
By the Hodge decomposition theorem for $\frac{\na \tal_i^2}{|\na \tal_i^2|}$ and the fact that $\abs{\frac{\na \tal_i^2}{|\na \tal_i|^2}}=1$, we have 
\EQ {\abs{\na \psi_i}_{L^q_\phi}\leq  \abs{\frac{\na \tal_i^2}{|\na \tal_i|^2}}_{L^q_\phi}\leq Cr^{m/q}.
}
The Sobolev embedding theorem implies 
\EQ{\label{Sobolev}
\abs{\psi_i}_{L^\infty(B_{r/2}(x_0))}\leq Cr.
}
So by (\ref{tal-i}) we have
\EQ{\label{tal2}\int_{B_{r/2}(x_0)} |\na \tal^2_i|~\phi dx &\leq \int_{B_{r/2}(x_0)}   \tal^2_i \tDel \psi_i~\phi dx\\
&=\int_{B_{r/2}(x_0)} \tDel\tal^2_i ~\psi_i~\phi dx 
=\int_{B_{r/2}(x_0)} \om_{j}\cdot\gamma_{ij}  ~\psi_i~\phi dx \\
&=\int _{B_{r/2}(x_0)}  \la du,e_j\ra\cdot\gamma_{ij}\psi ~\phi dx
=\int _{B_{r/2}(x_0)} \la u-\bar{u}, \tdel(\gamma_{ij}\otimes \psi_ie_i)~\phi dx\\
&=\int _{B_{r/2}(x_0)} \la u-\bar{u} ,d(\psi_ie_i)\cdot\gamma_{ij}\ra~\phi dx\\
&\leq C[u]_{\mathrm{BMO}_\phi(B_r(x_0))} \left(\int _{B_{r}(x_0)}  |\na u|^2~\phi dx \right)^{1/2}\left(\int _{B_{r}(x_0)}  |d(\psi_i e_i|^2~\phi dx\right)^{1/2}\\
&\leq C[u]_{\mathrm{BMO}_\phi(B_r(x_0))} \left(\int _{B_{r}(x_0)}  |\na u|^2~\phi dx \right)^{1/2}\left(r^2\int _{B_{r}(x_0)} |\na u|^2~\phi dx+Cr^n\right)^{1/2}.
}
Now we estimate $d\tal_i^1$. We  have $\tDel \tal_i^1=0$. Therefore by the  mean value formula in this setting we have 
\EQ{\label{tal1}\int_{B_{\kappa r}(x_0)}|d\tal_i^1|~\phi dx\leq C\kappa^n \int_{B_{ r}(x_0)}|d\tal_i^1|\phi dx.}
By \eref{bet}, \eref{tal1}, \eref{tal2} and for $\kappa\in(0,\frac{1}{2}]$ we have  
\Aligns{\int_{B_{\kappa r}(x_0)}|\na u| \phi dx&\leq \sum_{i=1}^m\int_{B_{\kappa r}(x_0)} (|d\tal_i^1|+|d\tal_i^2|+|\bet_i|~\phi dx\\
&\leq \sum_{i=1}^m\int_{B_{r/2}(x_0)} ( C \kappa^n|d\tal_i^1|+|d\tal_i^2|+|\bet_i|~\phi dx\\
&\leq  \sum_{i=1}^m\int_{B_{r/2}(x_0)} (C \kappa^n|\na u|+|d\tal_i^2|+ |\bet_i|~\phi dx\\
& \leq C_1[u]_{\mathrm{BMO}_\phi(B_r(x_0))} \left(\int _{B_{r}(x_0)}  |\na u|^2~\phi dx \right)^{1/2}\left(r^2\int _{B_{r}(x_0)} |\na u|^2~\phi dx+C_2r^n\right)^{1/2}\\
 &+C_3 \kappa^n \int_{B_{r}(x_0)} |\na u|~\phi dx.
}
  Therefore if $r^{2-n} \int_{B_{r}(x_0)}\nab \phi dx\leq \epsilon$, then we have 
  
\Aligns{&(\kappa r)^{1-n}\int_{B_{\kappa r}(x_0)} |\na u|~\phi dx
\leq 
C_1\kappa^{1-n}[u]_{\mathrm{BMO}_\phi(B_r(x_0))} \left( r^{2-n}\int _{B_{r}(x_0)}  |\na u|^2~\phi dx \right)^{1/2}\left(r^{2-n}\int _{B_{r}(x_0)} |\na u|^2~\phi dx+C_2\right)^{1/2}\\
&+C_3 r^{1-n}\kappa \int_{B_{r}(x_0)} |\na u|~\phi dx\\
&\leq C_1\kappa^{1-n}[u]_{\mathrm{BMO}_\phi(B_r(x_0))} (\epsilon+C_2\sqrt{\epsilon})+C_3 \kappa r^{1-n}\int_{B_{r}(x_0)} |\na u|~\phi dx.
}
\end{proof}
Define 
\Aligns{M(u,x_0,r)=\sup_{B_s(x_1)\subset B_r(x_0)}\left( s^{1-m}\int_{B_s(x_1)}|\na u| ~\phi dx\right).}
\bl\label{Iteration}
There exist $\epsilon_0$  and  $\kappa\in(0,1)$,  depending on $n$, $N$ and $F$, such that the following holds. Suppose $u\in {H}^1(B_r(x_0),N)$ is a stationary F-harmonic map  with 
\Aligns{
\Theta(x_0,r)\leq \epsilon_0. 
}
Then 
\Aligns{M(u,x_0,\kappa r)\leq \frac{1}{2}M(u,x_0,r).}
\el
\begin{proof}
By the monotonicity formula and for every $s\leq \frac{r}{2}$ and $x_1\in B_{\frac{r}{2}}(x_0)$ we have 

\Aligns{\theta(x_1,s)\leq\Theta(x_1,s)\leq \Theta(x_1,\frac{r}{2})\leq c(n)\Theta(x_0,r)\leq c(n)\epsilon_0.
} 
First by \eref{inequaltiy1} we have  
\EQ{s^{2-n}\int_{\B s {x_1}}\nab~\phi dx <Kc(n)\eps_0}
where $K$ depends on $F$ and $n$. Define $\epsilon_1=Kc(n)\epsilon_0$. By the Poincar\'{e} inequality 
\Aligns{[u]_{\mathrm{BMO}_\phi(B_{s}(x_1))}\leq C_PM(u,x_0,r).}
By \lemref{IterationS},
\Aligns{(\kappa s)^{1-n}\int_{B_{\kappa s}(x_1)} |\na u|F'(\nab)  dx
&\leq \left[C\kappa^{1-n}(\epsilon_1+C\sqrt{\epsilon_1})+C \kappa\right]
M(u,x_0,r).
}
We can choose  $\epsilon_1$ and $\kappa$  such that  $C\kappa^{1-n}(\epsilon_1+C\sqrt{\epsilon_1})+C \kappa\leq1/2$, completing the proof of  \lemref{Iteration}.
\end{proof}
Now we are ready to complete the proof of \thmref{epsilon-regularityS}.
\begin{proof} [Proof of \thmref{epsilon-regularityS}]
By \lemref{Iteration} we have 
 $$M(u,x_0,\kappa r)\leq \frac{1}{2}M(u,x_0,r).$$
Applying this lemma repeatedly, and since $M(u,x_0,r)$ is bounded, we have 
$$M(u,x_1,s)\leq C s^{\alpha}, $$ 
for $x_1\in B_{r/2}(x_0)$ and all $s\in(0,r/2]$, where $\alpha$ and $C$ do not depend on $x_1$ and $s$. In particular
\Aligns {\sup_{x_1 \in B_r/2(x_0)}\sup_{0<s\leq r/2} \left( s^{1-m-\alpha}\int_{B_s(x_1)}|\na u| ~\phi dx\right)<\infty.}
Since $\phi$ is bounded and by the Morrey decay lemma (see for example Lemma 2.1 in \cite{Moser05}), $u\in C^{0,\alpha}(B_{r/2}(x_0))$ and $\abs{u}_{C^{0,\alpha}}\leq C(n,N,F)$.
\end{proof}
Here we have another version of \thmref{epsilon-regularityS}.
\begin{lem}\label{Epsilon-Regularity1}
There exist  $\epsilon_0,r_0, \alpha \geq 0$ depending only on $n$, $N$ and $F$ such that if $u\in H^1(B_r(x_0),N)$ is any stationary F-harmonic map with 
\Aligns{\theta(x_0,r)\leq  \epsilon_0}
for $r\leq r_0$, then $u$ is in $C^{0,\alpha}(B_{\frac{r}{2}}(x_0))$ with $\abs{u}_{C^{0,\alpha}}\leq C(n,N,F)$.
\end{lem}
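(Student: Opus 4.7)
The plan is to deduce \lemref{Epsilon-Regularity1} directly from \thmref{epsilon-regularityS} by absorbing the error term $h(r)$ at sufficiently small scales. Recall that $\Theta(x_0,r)=\theta(x_0,r)+h(r)$, and that in the proof of \thmref{monotonicity-thm} the function $h$ was shown to be monotone nonnegative with $\lim_{r\to 0}h(r)=0$; moreover, all constants entering its definition ($c_e$, $A$, $\mc{B}$, $\mf{C}$) depend only on $n$, $N$, and $F$, so $h$ itself is a function of $r$ determined by $(n,N,F)$ alone.

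Concretely, I would first invoke \thmref{epsilon-regularityS} to fix thresholds $\tilde\epsilon_0>0$, $\alpha\in(0,1)$, and a constant $C=C(n,N,F)$. Using the monotone decay $h(r)\to 0$, I choose $r_0=r_0(n,N,F)>0$ small enough that $h(r_0)\leq \tilde\epsilon_0/2$, and set $\epsilon_0:=\tilde\epsilon_0/2$. Then any stationary $F$-harmonic map $u\in H^1(B_r(x_0),N)$ with $\theta(x_0,r)\leq \epsilon_0$ and $r\leq r_0$ automatically satisfies
$$\Theta(x_0,r)=\theta(x_0,r)+h(r)\leq \frac{\tilde\epsilon_0}{2}+\frac{\tilde\epsilon_0}{2}=\tilde\epsilon_0,$$
so \thmref{epsilon-regularityS} immediately yields $u\in C^{0,\alpha}(B_{r/2}(x_0))$ with $\abs{u}_{C^{0,\alpha}}\leq C(n,N,F)$, as required.

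The only point requiring care is to verify that the constant $A$ appearing in the definition of $h$ can indeed be chosen independently of the particular map $u$. Tracing the proof of \thmref{monotonicity-thm}, the admissible $A$ is determined by the finiteness of $\int_0^1 sJ_e(s^{-2})\,ds$ (noted in the remark following \lemref{Jensen}) and depends only on $F$; once this universality is secured, $r_0$ is well-defined as a function of the data alone. Given this, the reduction is essentially a one-line scaling/absorbing argument and requires no new analytic input beyond \thmref{epsilon-regularityS}.
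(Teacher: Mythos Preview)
Your proof is correct and matches the paper's approach: the paper offers no explicit proof of \lemref{Epsilon-Regularity1}, presenting it simply as ``another version of \thmref{epsilon-regularityS}'' (and, in the introduction, calling the analogous general statement a corollary), which is precisely the reduction you carry out. Your attention to the universality of $A$ in the definition of $h$ is appropriate; once $A$---and hence $h$---depends only on $(n,N,F)$ as asserted in \thmref{monotonicity-thm}, choosing $r_0$ so that $h(r)\le\tilde\epsilon_0/2$ for $r\le r_0$ and setting $\epsilon_0=\tilde\epsilon_0/2$ immediately yields the claim from \thmref{epsilon-regularityS}.
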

\subsection{\label{compactnessS}Compactness for the special case}
 In order to define the quantitative strata  we first need to build the notion of tangent maps.
 Recall that for a map  $u\in H^1(\Omega,N)$ we define the regular points and  the singular points of $u$ as follows:
\EQ{
&{\mc Reg}(u)=\cur{ x\in \Omega~|~ \text{u is}~C^{0,\alpha}~ \text{in a neighborhood of}~{x}},\\
&{\mc S}(u)={\mc Sing}(u)=\Omega\backslash {\mc Reg}(u),
}
where $\al$ is the minimum of H\"older constants $\alpha$  in \thmref{epsilon-regularityS} and \lemref{Epsilon-Regularity1}. By a simple covering argument and \thmref{epsilon-regularityS}, one can easily show that 
\Aligns{\mc{H}^{n-2}(\mc{S}(u))=0.}
In this subsection we  first study   the convergence of sequences of maps which satisfy \eref{weak-harmonic} and \eref{stationary-harmonic} under a uniform bound on their energy functional, and then we define the notion of tangent maps.  See for example \cite{Schoen84}, \cite{Lin99}  for the similar results for harmonic maps and stationary harmonic maps. More precisely, we have a sequence of maps $u_i$ which satisfies  
\EQ{\label{uniform-energy}\int_{\B 3 0} \int_{\B 3 0}F_i(\abs{\na u_i}^2)<{\Lambda}}
where $F_i$ satisfies \asspref{asspS}.
We also assume $u_i$ satisfies 
\begin{eqnarray}\label{sequenceS1}
\diver\pr{ F_i'(\abs{\na u_i}^2) \na u_i}- F_i'(\abs{\na u_i}^2)A(u_i)(\na u_i,\na u_i)=0\label{weak-harmonic-i}\label{weak-harmonic-i}\label{sequenceS1},\\
\na^{\al}(F_i(\abs{\na u_i}^2) g_{\alpha\beta}- 2F_i'(\abs{\na u_i}^2)u_i^*h_{\alpha\beta})=0\label{sequenceS2},
\end{eqnarray}
in the weak sense.  
\bp\label{Fatou}
Let $u_i$ and $F_i$  be as above. Then there exists a subsequence of $u_i$ (which we still denote by $u_i$) such that
\begin{enumerate}[a.]
 \item $u_i$ converges weakly in $H^1(B_3(0))$   to some $u$ and $u_i$ converges strongly to $u$ in $L^2(B_3(0))$. 
\item  Define 
\Aligns{\Sigma=\displaystyle{\bigcap_{0<r<r_0}}\cur{x\in B_1(0)~|~\liminf_{i} \Theta_{u_i}(x,r)\geq \epsilon_0},}
where $r_0$ is as in \lemref{Epsilon-Regularity1} and $\epsilon_0$ is the minimum of $\eps_0$ in \thmref{epsilon-regularityS} and \lemref{Epsilon-Regularity1}. Then $\Sigma$ is a closed set and has finite $(n-2)$-packing content. 
The maps $u_i$ converge strongly in 
$H^1_{\loc}(B_1(0)\backslash\Sigma, N)\cap C^{0,\alpha}_{\loc} (B_1(0)\backslash \Sigma,N)$ to $u$.
\item $u$ satisfies equation \eref{weak-harmonic-i} weakly  with $F_\infty$. 
\item The Radon measures $\mu_i=F_i(|\nabla u_i|^2)dx $ on $B_1(0)$ converge weakly as Radon measures to $\mu$, 
\Aligns{\mu_i\rightharpoonup \mu.}
 \end{enumerate}
 \ep
 \br
 By Fatou's Lemma 
\Aligns{\mu=F_\infty(\nab) dx+\nu}
where $\nu$ is a non-negative measure on $B_1(0)$ which is  supported on $\Sigma$, 
\Aligns{\Sigma=\spt \nu\cup {\mc S}(u).}
\er
 Define 
\Aligns{
\theta_{\mu_i}(x_0,r)&=r^{2-n}\mu_i({\B {r}{x_0}}),\\
\Theta_{\mu_i}(x_0,r)&=\theta_{\mu_i}(x_0,r)+ h_i(r),
}
where $h_i$ is as in \thmref{monotonicity-thm}. Then 
\Aligns{
\theta_{\mu_i}(x_0,r)&\to \theta_{\mu}(x_0,r)=\theta_{\mu_\infty}(x_0,r)+\theta_\nu(x_0,r)\\
\Theta_{\mu_i}(x_0,r)&\to \Theta_{\mu}(x_0,r)
}
where $\mu_\infty=F_\infty(\nab) dx$. Therefore we have 
\Aligns{\Sigma&=\bigcap_{0<r<r_0}\cur{~x\in B_1(0)~|~\liminf_{i} \Theta_{\mu_i}(x,r)\geq \epsilon_0}\\
&=\cur{x\in B_1(0)~|~\Theta_{\mu}(x)\geq \epsilon_0}\\
&=\cur{x\in B_1(0)~|~\theta_{\mu}(x)\geq {\epsilon_0}}.
}
Here  $\Theta_\mu(x,r)$ is monotone increasing with respect to $r$ and  
\[\Theta_\mu(x)=\lim_{r\to 0} \Theta_\mu(x,r)= \lim_{r\to 0} \theta_\mu(x,r).\]
Note that we do not know if $\theta_\mu(x,r)$ is monotone increasing with respect to $r$ but its limit exists as $r$ goes to zero.

The proof of the above proposition is similar  to Proposition 2.7 in \cite{Naber-Valtorta16}. The key point in the proof of the above proposition  is the following lemma and we leave the rest of the proof to the reader. 
\begin{lem}\label{smalltheta}
Let $u_i$ and $F_i$ be as above. Let $\Theta^i_{u_i}=\Theta^{F_i}_{u_i}\leq \eps_0$ where $\eps_0$ is the same as in Theorem \ref{epsilon-regularityS}.  If $u_i\rhp u$ in $H^1(B_1(0),N)$ then   $u_i$ converges strongly to $u$ in $H^1(B_1(0),N)$ and 
 $u$ satisfies 
\Aligns{
\diver \pr{F_\infty'(\nab)\na u}- F_\infty'(\nab)A(u)(\na u,\na u)=0
}
on $B_1(0)$, in the distributional sense. 
\end{lem}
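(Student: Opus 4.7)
The plan is to combine \thmref{epsilon-regularityS} with the uniform strong convexity of the Lagrangian $G_i(\xi):=F_i(|\xi|^2)$ to upgrade weak to strong $H^1_{\loc}$-convergence, and then pass to the limit in the weak equation. Since $\Theta^{F_i}_{u_i}(x,r)\leq \eps_0$ on $B_1(0)$, \thmref{epsilon-regularityS} gives uniform $C^{0,\al}_{\loc}(B_1(0))$ bounds on the $u_i$ --- uniform in $i$ because all $F_i$ share the Assumption S constants $\mc{B}, \mf{C}$. Combined with the weak $H^1$ hypothesis and Arzel\`a--Ascoli, a subsequence satisfies $u_i\to u$ uniformly on every $K\Subset B_1(0)$. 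The Assumption S bounds on $F_i, F_i', F_i''$ also permit extraction of locally uniform limits $F_i\to F_\infty$ and $F_i'\to F_\infty'$ on $[0,\infty)$.

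For the main step (strong $H^1_{\loc}$-convergence), fix a cutoff $\eta\in C_c^\infty(B_1(0))$ and test the extrinsic form of \eref{sequenceS1} for $u_i$ against $\ze=\eta^2(u_i-u)\in\R^q$ --- permissible because the equation holds for every $\R^q$-valued test field, the normal components being absorbed by the $A(u_i)$-term. Expanding and using $\|u_i-u\|_{L^\infty(\spt\eta)}\to 0$ together with the upper ellipticity $F_i'\leq 2\mc{B}/(nq)$, the bound $|A|\leq K$ from the bounded geometry of $N$, and the uniform energy, both the cutoff-gradient cross term and the curvature term contribute $o(1)$, leaving
\[
\int F_i'(|\na u_i|^2)\,\na u_i\cdot(\na u_i-\na u)\,\eta^2\,dx = o(1).
\]
A direct computation gives uniform strong convexity of $G_i$: $D^2G_i(\xi)[\nu,\nu]=4F_i''(|\xi|^2)(\xi\cdot\nu)^2+2F_i'(|\xi|^2)|\nu|^2\geq \frac{4\mc{B}^{-1}}{nq}|\nu|^2$, using $F_i''\geq 0$ and the derived lower bound $F_i'\geq 2\mc{B}^{-1}/(nq)$. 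Hence the operator $\xi\mapsto 2F_i'(|\xi|^2)\xi$ is uniformly strongly monotone, and applying this at $\na u_i$ and $\na u$ integrated against $\eta^2$ yields
\[
\frac{2\mc{B}^{-1}}{nq}\int|\na u_i-\na u|^2\eta^2\leq \int F_i'(|\na u_i|^2)\,\na u_i\cdot(\na u_i-\na u)\,\eta^2-\int F_i'(|\na u|^2)\,\na u\cdot(\na u_i-\na u)\,\eta^2.
\]
The first right-hand term is $o(1)$ by the testing. The second is also $o(1)$: since $F_i'$ is uniformly bounded and $F_i'\to F_\infty'$ pointwise, dominated convergence gives $F_i'(|\na u|^2)\na u\to F_\infty'(|\na u|^2)\na u$ strongly in $L^2$, and pairing with the weakly null $\na u_i-\na u$ produces a vanishing integral. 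Therefore $\na u_i\to\na u$ strongly in $L^2_{\loc}(B_1(0))$.

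With strong $H^1_{\loc}$-convergence, extract a further subsequence with $\na u_i\to\na u$ pointwise a.e. Then $F_i'(|\na u_i|^2)\na u_i\to F_\infty'(|\na u|^2)\na u$ and $F_i'(|\na u_i|^2)A(u_i)(\na u_i,\na u_i)\to F_\infty'(|\na u|^2)A(u)(\na u,\na u)$ pointwise a.e., with $L^2$ and $L^1_{\loc}$ majorants coming from the uniform bounds on $F_i'$, $|A|$ and the $L^2$-convergence of $\na u_i$. Vitali convergence in each term of \eref{sequenceS1} then delivers the weak equation for $u$ with $F_\infty$. The main obstacle, and the reason the lemma is non-trivial, is the nonlinear coupling of the coefficient $F_i'(|\na u_i|^2)$ with the only weakly-converging gradient $\na u_i$: a weak $L^2$-limit of their product cannot be identified by routine arguments. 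The uniform strong convexity built into Assumption S through the lower ellipticity bound is precisely what converts the test-function identity into a quadratic $L^2$-bound on $\na u_i-\na u$, bypassing this identification problem.
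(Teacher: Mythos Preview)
Your argument is correct and follows the same overall strategy as the paper: invoke \thmref{epsilon-regularityS} for uniform $C^{0,\al}$ bounds (hence $u_i\to u$ in $L^\infty_{\loc}$), test the weak equation for $u_i$ against $(u_i-u)$ times a cutoff, use the equation and the $L^\infty$ smallness of $u_i-u$ to kill the resulting terms, and then pass to the limit in \eref{sequenceS1} once strong $H^1_{\loc}$-convergence is established.

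The one genuine difference is how you extract the coercive lower bound. The paper expands $\int F_i'(|\na u_i|^2)|\na(u_i-u)|^2\ze$ directly as the difference of two bilinear terms; the ``second'' term there is $\int F_i'(|\na u_i|^2)\la\na(u_i-u),\na u\ra\ze$, whose coefficient $F_i'(|\na u_i|^2)$ still depends on $\na u_i$, so the claim that it vanishes by weak convergence alone is a little delicate. You instead use the uniform strong convexity of $G_i(\xi)=F_i(|\xi|^2)$ (which is exactly where the hypothesis $F_i''\ge 0$ of \asspref{asspS} enters) to write the monotonicity inequality, so that your residual term is $\int F_i'(|\na u|^2)\na u\cdot(\na u_i-\na u)\eta^2$ with coefficient depending only on the fixed limit $\na u$; dominated convergence then gives strong $L^2$ convergence of this coefficient and the pairing with the weakly null $\na u_i-\na u$ vanishes cleanly. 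This is a tidier justification of the same step and buys you a proof that does not require any auxiliary argument for the cross term.
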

\begin{proof}
 First by \thmref{epsilon-regularityS}, we have that $\abs{u_i}_{C^{0,\alpha}(B_ {1}(0))}\leq C$, with a uniform bound independent of $i$. Since $N$ is a compact manifold, we also have that $\abs{u_i}_{L^\infty(B_ 3( 0))}$ is uniformly bounded. Thus $u_i$ converges to 
$u$ in  $C^{0,\alpha/2}(\B 1 0)$.
 For the strong $L^2$ convergence we show that 
\Aligns{\int_{B_1(0)}\abs{\na( u_i- u)}^2F'_i(|\na u_i|^2)\ze\to 0
}
for any $\ze$ in $C^\infty_c(B_1(0))$. We have 
\Aligns{&\int_{B_1(0)}|\na( u_i- u)|^2F_i'(|\na u_i|^2)\ze\\
&=\int \la\na( u_i- u),\na u_i\ra F'_i(|\na u_i|^2)\ze+\int \la\na( u_i- u),\na u\ra F'_i(|\na u_i|^2)\ze.
}
The second integral converges to $0$ because of the uniform $C^1$-norm bound on $F_i$ and  since $u_i$ weakly converges to $u$ in $H^1(B_3(0))$.  Further, we have 
\EQ{\label{sthG1}\int \la\na( u_i- u),\na u_i\ra F_i'(|\na u_i|^2)\ze&\leq \int \la u_i- u,\diver(F_i'(|\na u_i|^2)\na u_i )\ra \ze\\
&+C \int \la u_i- u,\na u_i \cdot \na\ze\ra F'(|\na u_i|).
}
This term also converges to zero by the fact $u_i-u$ converges to zero in $L^\infty$ and since 
\EQ{\label{sthG2}
\abs{\int \diver(F'_i(|\na u_i|^2)\na u_i ) }=\abs{F'_i(|\na u_i|^2)A(u_i)(\na u_i,\na u_i)}\leq C \abs{A}_{L^\infty} \int \abs{\na u_i}^2.
}
  To see that $u$ satisfies \eref{weak-harmonic}, note that for every $\ze$ in $C_c^\infty (B_1(0))$ 
\EQ{\label{sthG3}
&\int F_\infty'(|\na u|^2) \la\na u,\na \ze \ra= \lim_{i\to \infty}\int F_i'(|\na u_i|^2)\la \na u_i,\na \ze\ra\\
 &=\lim_{i\to\infty} \int F_i'(\abs{\na u_i}^2)  A(u_i)(\na u_i,\na u_i) \ze=  \int  F_\infty'(\abs{\na u}^2) A(u)(\na u,\na u)\ze.
}
\end{proof}
\subsubsection{Tangent map}\label{tangent-map}
Let $u\in H^1(\B 3 0,N)$ be a stationary F-harmonic map. Define the map $u_{x,\lambda}(y)=u(x+\lambda y)$ for $x\in \B1 0$ and $\lambda\leq 1$. Then the map $u_{x,\lambda}$ satisfies the following equations: 
\EQ{
&\int  [\diver( F_\lambda'(|\nabla u_{x,\lambda}|^2) \na u_{x,\lambda})- F_\lambda'(|\nabla u_{x,\lambda}|^2)A(u_{x,\lambda})(\na u_{x,\lambda},\na u_{x,\lambda})]\ze=0\\
&\int F_\lambda(|\nabla u_{x,\lambda}|^2) \diver(\ze)- 2F_\lambda'(|\nabla u_{x,\lambda}|^2)(u_{x,\lambda}^*h)_{\alpha\beta}\nabla^\al\ze^\beta=0
}   
where $F_\lambda(x)=\lambda^2F(\frac{x}{\lambda^2})$ and so $F_\lambda'(x)=F'(\frac{x}{\lambda^2})$. Note that the corresponding $G_\lambda$ for $F_\lambda$ satisfies \asspref{asspS}.  Then $\theta^\lambda$ and $\Theta^\lambda$ for the function $F_\lambda$  will be as follows
\Aligns{
\theta^\lambda_{u}(x_0,r)&=r^{2-n}\int_{B_r(x_0)}F_\lambda (\nab) dx, \\
\Theta^\lambda_u(x_0,r)&=\theta^\lambda_u(x_0,r)+ h_\lambda (r).
}
One can easily check that  $h_\lambda(r)=h(\lambda r)$, and thus
\Aligns{
&\theta^\lambda_{u_{x,\lambda}}(0,r)= \theta_{u}(x,\lambda r), \\
&\Theta^\lambda_{u_{x,\lambda}}(0,r)= \Theta_{u}(x,\lambda r).
}
By the monotonicity formula for $\theta^\lambda$ we have  
$
\int_{B_1(0)}F_\lambda (|\nabla u_{x,\lambda}|^2)$ {is uniformly bounded.}   

Therefore  there exist a subsequence of $u_{x,\lambda}$ (denoted again by $u_{x,\lambda}$) which converges weakly in $H^1(B_1(0))$ to a map $u_*$ as $\lambda$ goes to zero. We have   
\Aligns{ &\lim_{\lambda\to 0} F_\lambda(x)=\lim_{t\to \infty}F'(t) x=F'(\infty)x,\\
&\lim_{\lambda\to 0} h_\lambda(r)=0.
}
For a measure $\mu$ we define $\mu_{x,\lambda}(A)=\lambda^{n-2}\mu(x+\lambda A)$. For $\mu=F(\nab) dx$, then we have 
\[\mu_{x,\lambda}=F_\lambda(|\nabla u_{x,\lambda}|^2) dx \rightharpoonup \mu_*=F'(\infty)|\nabla u_*|^2dx+\nu_*\]
and 
\Aligns{\theta_{\mu_{x,\lambda}}(0,r)\to \theta_{\mu_*}(0,r)=\theta_{\mu_{\infty,*}}(0,r)+\theta_{\nu_*}(0,r)
}
where $\mu_{\infty,*}=F'(\infty)|\nabla u_*|^2dx$. Note that $\Theta_{\mu_*}(0,r)=\theta_{\mu_*}(0,r)$ and therefore $\theta_{\mu_*}(0,r)$ is monotone increasing with respect to $r$. We further have 
\EQ{\label{tg-measure}
\theta_{\mu_*}(0,r)=\lim_{r\to 0}\theta_\mu(x,r)= \theta_\mu(x).
}
We call $u_*$ a \textit{tangent map} for $u$ at $x$ and  we have the following result.
\bl\label{tg-map}
The tangent map $u_*$ satisfies the following properties:
\begin{enumerate}[a.]
\item $u_*$ is  homogeneous, i.e. $(u_* )_{0,\lambda}=u_*$.
\item  $u_*$ is a weakly harmonic map.  
\item The measures $\mu_*$ and $\nu_*$ are homogeneous measures.
\end{enumerate}
\el
\begin{proof}
For Part a, the 0-homogeneity of the tangent map is because by monotonicity formula \eref{monotonicityf} we have
\EQ{
\int_{ B_1(0)\backslash B_t(0)}
{2r^{2-n}}
 {F}'_\lambda(|\nabla u_{x,\lambda}|^2)|\frac{\partial u_{x,\lambda}}{\partial r}|^2 
\leq \Theta^\lambda_{u_{x,\lambda}}(0,1)-\Theta^\lambda_{u_{x,\lambda}}(0,t)
}
and therefore
\EQ{
\int_{ B_1(0)\backslash B_t(0)}
{2r^{2-n}}
 {F}'(|\nabla u_{*}|^2)|\frac{\partial u_{*}}{\partial r}|^2 
\leq \theta_{\mu_{*}}(0,1)-\theta_{\mu_{*}}(0,t)=0.
}
This shows  that $\frac{\partial u_{*}}{\partial r}=0$ for almost every $r$. Part b is obtained by \propref{Fatou}. 
Part c  follows by a similar argument to the one in \cite{Lin99}, Lemma 1.7 (ii). 
\end{proof}

\section{\label{general-case}{General case} $F(x,u,\nab)$}
In this section we consider the general case   
\Aligns{
E(u)=\int_\Omega F(x,u,\nab)
}
for  $u\in H^1(\Omega,N)$. Recall that the Euler-Lagrange equation with respect to this energy functional is 
\Aligns{
-\int F_{z_k}(x,u,\nab)\ze^k+\int  F_{\mf p}(x,u,\nab)\bra{\la\na_i u,\na_i \ze\ra- A(u)(\na u,\na u)\ze}=0,
}
and the stationary equation related to this energy functional is 
\Aligns{
-\int F_{x_l}(x,u,\nab)\ze^l+2\int  F_{\mf p}(x,u,\nab) \la\na_i u,\na_j u\ra\nabla^i\ze^j -F(x,u,\nab)\diver(\ze)=0.
}
Again by considering $\ze$ as  in \secref{special-case}, for $q\in \Omega$ and $r>0$ such that $\B r q\subset \Omega$ we have
\EQ{\label{monotonicity222}
\frac{d}{dr}\pr{ r^{2-n}\int_{\B r {x_0}}F(x,u,\nab )}&+2r^{1-n}\int_{\B r  {x_0}} e(x,u,\nab)- r^{1-n}\int_{\B r {x_0}}F_{\mf p}(x,u,\nab )(x_l -q_l)\\
&=2r^{2-n}\int_{\partial \B r {x_0}}F_{\mf p}(x,u,\nab)|\frac{\partial u}{\partial r}|^2,
}
where 
\EQ{e(x,u,\nab)=F_{\mf p}(x,u,\nab)\nab-F(x,u,\nab).}
By a similar argument to that  in the special case $F=F(\nab)$,  the ellipticity condition i in \asspref{asspG} imposes the following conditions on $F$:
\EQ{ \label{inequaltiy1-1}\frac{2\mc{B}^{-1}}{nq}&\leq F_{{\mf p}}(x,z,{\mf p}) \leq \frac{2\mc{B}}{nq},\\
\frac{\mc{B}^{-1}-\mc{B}}{nq}&\leq F_{{\mf p}{\mf p}}(x,z,{\mf p}){\mf p}\leq \frac{{\mc{B}-\mc{B}^{-1}}}{nq}.
}
\subsection{Monotonicity formula for the general case}\label{section-monotoneG}
We use a similar argument to that in \lemref{Jensen} and \thmref{monotonicity-thm} to prove our monotonicity formula for the  general case. 
\bl
The error term   $e$ satisfies the following Jensen-type inequality
\Aligns{\fint_{\B {r} {x_0}} e(x,u,\nab) dx \leq J\pr{\fint_{\B {r} {x_0}}\nab dy}}
for any map $u\in H^1(\Omega, N)$ and $\B {r} {x_0}\subset \Omega$, where the function $J$ is 
\Aligns{J(y)=\tilde{E}(y)+y \int_{y}^{\infty}  \frac{E(t)}{t}dt
}
with $\tilde{E}(\mf{p})=\sup_{x,z}e(x,z,\mf{p})$ and $E(\mf{p})=\sup_{x,z}e_{\mf p}(x,z,\mf p)$.  Furthermore
\Aligns{\int_0^1 yJ(y^{-2}) dy<\infty.
}
\el
\begin{proof}
For every $x \in \B {r}{x_0}$ we have
 \EQ{\label{fundamentalG}e(x,u(x),f(x))-e(x,u(x),\bar{f})=\int_{\bar{f}}^{f(x)} e_t(x,u,t) dt}
where $f=\nab$ and  $\bar{f}=\fint_{\B {r} {x_0}} \nab =\V \int \nab dx $.  Define the set 
\Aligns{&U=\cur{x \in \B {r} {x_0}~\lt |\rt.~  f(x)\geq \fint f},\\
& U_t=\cur{x \in \B {r} {x_0}~\lt |\rt.~  f(x)\geq t}.
}
By averaging  \eref{fundamentalG} over ball $\B {r} {x_0}$ 
\Aligns{
&\fint e(x,u(x),f(x)) dx-\fint e(x,u(x),\bar{f})=\fint \int_{\bar{f}}^{f(x)} e_t(x,u,t) dt dx\\
&\leq  \V \int _{U}  \int_{\bar{f}}^{f(y)} e_t(x,u,t) dt dx\\
&= \V \int_{\bar{f}}^{\infty}  \int_{U_t} e_t(x,u,t)  dy ~dt\\
&\leq \V \int_{\bar{f}}^{\infty}\int_{U_t} e_t(x,u,t)  \frac{f(x)}{t} dx ~dt\\
&\leq   \V \int_{\B {r} {x_0} }f(x)\int_{\bar{f}}^{\infty} \frac{ e_t(x,u,t)}{t} dx ~dt\\
&\leq \bar{f} \int_{\bar{f}}^{\infty}  \frac{E(t) }{t}dt,
}
where $E(t)=\sup_{x,u} e_t(x,u,t)$. Define $\tilde{E}(t)=\sup_{x,u} e(x,u,t)$.
Therefore 
\Aligns{
\fint e(x,u(x),f(x)) dx&\leq \fint e(x,u(x),\bar{f})+\bar{f} \int_{\bar{f}}^{\infty}  \frac{E(t) }{t}dt\\
&\leq   \tilde{E}(\bar{f})+\bar{f} \int_{\bar{f}}^{\infty}  \frac{E(t) }{t}dt.
}
Finally we have
\Aligns{\fint e(x,u,\nab) \leq J(\fint \nab),
}
where $J(y)= \tilde{E}(y) +y \int_{y}^{\infty}  \frac{E(t) }{t}dt$.
 The second part of this theorem follows from by \lemref{Jensen}.\end{proof}

\bt\label{monotonicity-thmm}
Let $u$ be a stationary F-harmonic  map in $H^1(\Omega, N)$. Then there exists $A=A(n,N,\mc{B},\mf{C},\mf{D})$ such that
\EQ{\label{monotonicityg}
\frac{d}{dr}\pr{
e^{\frac{\vartheta}{c_e}r}r^2 \mathcal{E}(r)+h(r)
}
\geq\int_{\p B_r(x_0)}F_{\mf p}(x,u,\nab)|\frac{\p u}{\p r}|^2,
}
where $\mc{E}(r)=\fint_{\B {r}{x_0}}F(x,u,\nab) dx$ and $h(r)=2\int_{ 0}^r t J(2c_eA^2t^{-2}) dt$ with $c_e={nq\mc{B}}/{2}$. 
\et
\begin{proof}
Recall that by \eref{monotonicity222} we have 
\EQ{
\frac{d}{dr}\pr{ r^{2-n}\int_{\B r {x_0}}F(x,u,\nab )}&+2r^{1-n}\int_{\B r {x_0}} e(x,u,\nab)- r^{1-n}\int_{\B r {x_0}}F_{x_l}(x,u,\nab )x_l \\
&=2r^{2-n}\int_{\partial \B r {x_0}}F_{\mf p}(x,u,\nab)|\frac{\partial u}{\partial r}|^2
}
where $e(x,u,\nab)=F_{\mf p}(x,u,\nab)\nab-F(x,u,\nab)$. By Assumption B we have 
\Aligns{\abs{- r^{1-n}\int_{\B r {x_0}}F_{x_l}(x,u,\nab )x_l } \leq \frac{\vartheta}{c_e} r^{2-n} \int_{\B r {x_0}}F(x,u,\nab). }
Therefore  
\EQ{\label{monotonicity22}
\frac{d}{dr}\pr{ e^{\frac{\vartheta}{c_e}r}r^{2}\fint_{\B r {x_0}}F(x,u,{\nab})}+2r J\pr{c_e \fint_{\B r {x_0}}F(x,u,{\nab})}&\geq  0.
}
The rest of the proof follows by the exact same argument as in the proof of \thmref{monotonicity-thm}.
\end{proof}
\subsection{\label{e-regularityG}$\epsilon$-regularity for the general case}
In this subsection we prove \thmref{epsilon-regularityG}. Roughly speaking, we show that for a map $u\in H^1(\Omega,N)$ which satisfies \eref{weakly-harmonic2} and \eref{stationary-harmonic2}  where the energy \eref{energy-functional2}  is small,  $u$ is H\"older continuous.
The argument here will be a slight generalization of argument in \subref{epsilon-regularitySP}. First we generalize \lemref{IterationS}. 
\bl\label{IterationG}
 Suppose $u\in H^1(B_r(x_0),N)$
satisfies equation (\ref{weakly-harmonic2}) with 
\Aligns{r^{2-n} \int_{B_{r}(x_0)}F_{\mf p}(x,u,\nab)\nab\leq \epsilon.}
Then there exists a constant $C=C(n,N,F,\Lambda)$ such that  the following holds:
\Aligns{(\kappa r)^{1-n}\int_{B_{\kappa r}(x_0)} |\na u|F_{\mf{p}}(x,u,\nab)  dx
&\leq C\kappa^{1-n}[u]_{\mathrm{BMO}_\phi(B_r(x_0))} (\epsilon+C\sqrt{\epsilon})\\
&+C \kappa r^{1-n}\int_{B_{r}(x_0)} \abs{\na u}F_{\mf{p}}(x,u,\nab) ~dx+C\epsilon \kappa^{1-n}r
}
for any $\kappa\in (0,1)$. 
\el
\begin{proof}
We follow a similar argument to that in  \subsubref{epsilon-regularitySP} and we only mention the changes we need to consider in this general case. First there exists an orthonormal tangent frame 
$(e_1,\ldots,e_m)$ along $u$ which satisfies \eref{minimum} and the forms $\gamma_{ij}$ satisfy \eref{gamma-df}.
For the 1-form $\omega_i$  we get 
\EQ{
&\tilde{\delta} \om_i=\om_j\cdot\gamma_{ij}+\frac{1}{\phi} \la F_z(x,u,\nab),e_i\ra, \\
&d \om_i=\om_j\wedge \gamma_{ij}.
}
where  $\phi=F_{\mf{p}}(x,u,\nab)$. Following the argument in the proof of \lemref{IterationS} we then have 
 \Aligns{&\tdel \al_i=\tdel \tom_i=\om_j\cdot\gamma_{ij}+\frac{1}{\phi} \la F_z(x,u,\nab),e_i\ra,\\
& d\bet_i=d\om_i= \om_j\wedge \gamma_{ij}.
 }
The estimate for $\beta_i$ will remain similar to that in  \subsubref{epsilon-regularitySP}  but the estimate for $\al_i$ changes.  For estimate on $\al_i$, define $\tal_i^1$ similar to  that in  \subsubref{epsilon-regularitySP} and let $ \tal_i^2$ satisfy the following.
 \EQ{\begin{cases}
 \tDel \tal_i^2=\tDel \tal_i=\om_j\cdot\gamma_{ij}+\frac{1}{\phi} \la F_z(x,u,\nab),e_i\ra,& \text{in}~B_{{r}/{2}}(x_0)\\
 \tal_i^2=0 &\text{on}~ \partial B_{{r}/{2}}(x_0).
\end{cases} }
While the estimate for $ \tal_i^1$ will remain the same as in the proof \lemref{IterationS}, for  $\tal^2_i$ we have 
\EQ{\label{tal2}\int_{B_{r/2}(x_0)} |\na \tal^2_i|~\phi dx \leq &C[u]_{\mathrm{BMO}_\phi(B_r(x_0))} \left(\int _{B_{r}(x_0)}  |\na u|^2~\phi dx \right)^{1/2}\left(r^2\int _{B_{r}(x_0)} |\na u|^2~\phi dx+Cr^n\right)^{1/2} \\
& + \int _{B_{r}(x_0)} \la F_z(x,u,\nab),e_i\ra \psi_i~dx .
}
By \eref{Sobolev} and \asspref{asspG} we have   
\Aligns{(\kappa r)^{1-n}\int_{B_{\kappa r}(x_0)} |\na u|~\phi dx
\leq &C_1\kappa^{1-n}[u]_{\mathrm{BMO}_\phi(B_r(x_0))} (\epsilon+C_2\sqrt{\epsilon})+
C_3 \kappa r^{1-n}\int_{B_{r}(x_0)} |\na u|~\phi dx \\
&+\kappa^{1-n} \frac{\vartheta}{c_e}r^{2-n}\int _{\B {\kappa r}{x_0}} \nab\phi~dx.
}
\end{proof}Now we prove \thmref{epsilon-regularityG}. 
\begin{proof}[Proof of \thmref{epsilon-regularityG}]
Define \Aligns{M(u,x_0,r)=\sup_{B_s(x_1)\subset B_r(x_0)}\left( s^{1-m}\int_{B_s(x_1)}|\na u| ~\phi dx\right).}
By a similar argument to that  in \lemref{Iteration} we can choose $\epsilon_0$ and $\kappa$ small enough such that  
\Aligns{M(u,x_0,\kappa r)\leq \frac{1}{2}M(u,x_0,r)+\frac{1}{2}.}
 Applying the \lemref{IterationG} repeatedly we have 
$$M(u,x_1,s)\leq C s^{\alpha} $$ 
for $x_1\in B_{r/2}(x_0)$ and all $s\in(0,r/2]$ where $\alpha$, and $C$ do not depend on $x_1$ and $s$. 
 In particular
\Aligns {\sup_{x_1 \in B_r/2(x_0)}\sup_{0<s\leq r/2} \left( s^{1-m-\alpha}\int_{B_s(x_1)}|\na u| ~\phi dx\right)<\infty.}
Since $\phi$ is bounded and by the Morrey decay lemma,   $u\in C^{0,\alpha}(B_{r/2}(x_0))$.
\end{proof}
\subsection{\label{compactnessG}Compactness  for the general case}
In this subsection we discuss the proof of  \propref{Fatou} and \lemref{tg-map}.  We do not state these results  again here. We only consider  sequences $u_i$  which satisfy  
\EQ{\label{uniform-energy}\int_{\B 3 0}F^i(x,u_i,\abs{\na u_i}^2)<{\Lambda},}
where $F^i$ satisfies \asspref{asspG}, and 
\begin{eqnarray}
-& {F}_{z_k}^i(x,u_i,\abs{\na u_i}^2)+ \diver \pr{{F}_{\mf p}^i(x,u_i,\abs{\na u_i}^2)\na u_i}
-{F}_{\mf p}^i(x,u_i,\abs{\na u_i}^2) A(u_i)(\na u_i,\na u_i)=0 \label{sequenceG1}\\
&{F}_{x_l}^i(x,u_i,\abs{\na u_i}^2)+\na^\al \pr{{F}^i(x,u_i,\abs{\na u_i}^2)g_{\alpha\beta}-2  {F}_{\mf p}^i(x,u_i,\abs{\na u_i}^2)u_i^*h_{\alpha\beta}}=0\label{sequenceG2}
\end{eqnarray}
in the weak sense.  

The only main change in the proof of \propref{Fatou} for the general case compared to the special case  happens in the proof of \lemref{smalltheta}. Here we present the changes which we should consider in the  proof of \lemref{smalltheta} for the general case.

For the strong $L^2$ convergence we  show that 
\Aligns{\int_{B_1(0)}\abs{\na( u_i- u)}^2F_{\mf p}^i(x,u_i,\abs{\na u_i}^2)\ze\to 0
}
for any $\ze$ in $C^\infty_c(B_1(0))$. The argument is the same as in the proof of 
\lemref{smalltheta}  except in \eref{sthG2} we have 
\Aligns{
\abs{\int \diver(F_{\mf p}^i(x,u_i,\abs{\na u_i}^2)\na u_i )}=F_{\mf p}^i(x,u_i,\abs{\na u_i}^2) \abs{A(u_i)(\na u_i,\na u_i)}+ {F}_{z_k}^i(x,u_i,\abs{\na u_i}^2)\leq C(\abs{A}_{L^\infty}+\vartheta) \int \abs{\na u_i}^2.
}
Considering the properties of tangent maps in \lemref{tg-map}, first notice that the maps $u_{x,\lambda}$ in the general case satisfy
\Aligns{
-& {F}_{z_k}^\lambda(x,u_{x,\lambda},\abs{\na u_{x,\lambda}}^2)+ \diver \pr{{F}_{\mf p}^\lambda(x,u_{x,\lambda},\abs{\na u_{x,\lambda}}^2)\na u_{x,\lambda}}
-{F}_{\mf p}^\lambda(x,u_{x,\lambda},\abs{\na u_{x,\lambda}}^2) A(u_{x,\lambda})(\na u_{x,\lambda},\na u_{x,\lambda})=0\\
&{F}_{x_l}^\lambda(x,u_{x,\lambda},\abs{\na u_{x,\lambda}}^2)+\na^\al \pr{{F}^\lambda(x,u_{x,\lambda},\abs{\na u_{x,\lambda}}^2)g_{\alpha\beta}-2  {F}_{\mf p}^\lambda(x,u_{x,\lambda},\abs{\na u_{x,\lambda}}^2)u_{x,\lambda}^*h_{\alpha\beta}}=0
}
in the weak sense, where
\Aligns{F^\lambda(x,z,\mf{p})=\lambda^2F(\lambda x,z,\frac{\mf{p}}{\lambda^2}).}
We also have $\theta^\lambda_u(x_0,r)=e^{\frac{\vartheta}{c_e}r}r^{2-n}\int_{\B r {x_0}} F^\lambda(x,u,\nab)$. The limit function is given by
\Aligns{\lim_{\lambda\to 0}F_\lam(x,z,\mf p)=\lim_{t\to \infty} F_{\mf p}(0,z,t)\mf{p}=F_{\mf p}(0,z,\infty)\mf{p}.}
Finally with the  argument similar to the one in \lemref{tg-map}, we can conclude $u_*$, $\mu_*$ and $\nu_*$ are homogeneous and that  $u_*$  weakly satisfies 
\EQ{\label{compacness-seq}
- {F}_{\mf{p}z_k}(0,u_*,\infty)\na^k u_*\abs{\na u_*}^2 + \diver \pr{F_{\mf{p}}(0,u_*,\infty)\na u_*}
-F_{\mf{p}}(0,u_*,\infty) A(u)(\na u_*,\na u_*)=0.
}
\section{Stratification of the singular Set}\label{stratification}
In this section we  prove \thmref{mainG} and \thmref{minimizingG}. The proof of \thmref{mainG}  is very similar to the proof of Theorem $3.1$ in  \cite{Naber-Valtorta16}. We explain the necessary background material for the proof  of \thmref{mainG} in \appref{appendix} and we use the notations from  \appref{appendix}. 
We first recall the definitions of quantitative strata and their properties (see \cite{Cheeger-Naber13.2} and \cite{Naber-Valtorta17}).
\subsection{Quantitative singular set}\label{quantitative-singular}
Here we give  the definition of $k$-th singular strata and its quantitative version. 
\bd
Given a map $h\in H^1(\R^n,N)$, we say that 
\begin{enumerate}[a.]
\item $h$ is homogeneous with respect to the point $p$ if $h(p+\lambda v)=h(p+v)$ for all $\lambda>0$ and $v\in \R^n$. 
\item $h$ is $k$-symmetric if it is homogeneous with respect to the origin and it has an invariant $k$-dimensional subspace, i.e., if there exists a linear subsapce $V\subset \R^n$ of dimension $k$ such that 
$h(x+v)=h(x)$ for all $x\in \R^n$ and $v\in V$.
\end{enumerate}
\ed
A map $h$ is $0$-symmetric if and only if it is homogeneous with respect to the origin. 
\begin{defn}
Given a map $u$ in $H^1(\Omega,N)$, we say that $B_r(x)\subset \Omega$ is $(k,\epsilon)$-symmetric  for $u$ if there exists a $k$-symmetric function $h$ such that  
\[
\fint_{B_1(0)} |u_{x,r}(y)-h(y)|^2dy\leq \epsilon.
\]
\end{defn}
Now we define a stratification for the singular set  $\mc{S}(u)$ of a stationary map $u$ in $H^1(\Omega,N)$.
\begin{defn}
The $k$-th stratafor $u$ which we denote by 
$\mc{S}^k(u)$ is 
\EQ{
\mc{S}^k(u)=
\{x\in \Omega~\lt|\rt.~ \textrm{no tangent map at}~x~\text{ is}~ k\text{-symmetric}\}.
}
\end{defn}
Using the definition of $(k,\eps)$-symmetry we can define the quantitative stratification based on how the points look at different scales as follows.
\begin{defn}
Given a map $u\in H^1(\Omega,N )$, and $r,\epsilon>0$ and $k\in\{0,\ldots,n\}$ we define the $k$-th $(\eps,r)$-stratification $\mc{S}^k_{\eps,r}(u)$ by
\EQ{
\mc{S}^k_{\eps,r}(u)
=\cur{ x \in \Omega~\lt|\rt.~\text{for no}~r\leq s<1,~ B_s(x)~\text{is}~(k+1,\epsilon)\text{-symmetric w.r.t. }~u.}
}
\end{defn}
Note that $\mc{S}^k_{\eps,r}(u)$ has the following  property 
\EQ{
k'\leq k, ~\epsilon'\geq \epsilon,~r'\leq r \Rightarrow ~ \mc{S}^{k'}_{\eps',r'}(u)\subset \mc{S}^{k}_{\eps,r}(u).
}
Using this fact we define the $k$th $\eps$-stratification by 
\EQ{
\mc{S}^k_{\eps}(u)&=\bigcap_{r>0}\mc{S}^k_{\eps,r}(u)\\
&=\cur {x \in \Omega~\lt|\rt.~\text{for no}~0< r<1,~ B_r(x)~\text{is}~(k+1,\epsilon)\text{-symmetric w.r.t. }~u}
}
Note that 
\EQ{\mc{S}^k(u)=\bigcup_{\eps>0}\mc{S}^k_{\eps}(u).
}
See Lemma 4.3 in \cite{Naber-Valtorta16} for the proof of the above equalities.
\subsection{Proof of \thmref{mainG}}
 First we prove the Minkowski estimates (\ref{Minkowski1})  and (\ref{Minkowski2}). 
\subsubsection*{\small \bf Proof of Minkowski estimates}
We will give the proof of  \thmref{mainG} only  for the  sets $\mc{S}\subset \mc{S}^k_{\eps,\del r}$. Since $\del$ is a constant  depending on $(n,N,\Lambda,F,\eps)$ this does not effect  the conclusion for $\mc{S}\subset \mc{S}^k_{\eps,r}$ except for the size of the constant $C'_\eps$. 
  Therefore we will show 
 \EQ{\vol \pr{   \B {r}{\mc{S}^k_{\eps,\del r}(u)}\cap \B 1 0} \leq C'_\eps r^{n-k}.}
where $\del=\min\cur{\hat\del,\tilde \del}$. We put  $\mc{S}={\mc{S}^k_{\eps,\del r}(u)}\cap \B 1 0$ and by the  monotonicity formula for $\bar\Theta$ we have 
\EQ{
\forall x\in \B 1 0~\text{and} ~\forall r\in[0,1], ~\bar\Theta(x,r)\leq \Lambda'= c(n)\Lambda+c(n,\mf{C}). 
}
Let $E=\sup_{x\in\mc{S}} \bar\Theta(x,1)\leq \Lambda'$. We refine the covering in \lemref{covering2} through an inductive process to get the following covering on $\mc{S}$
\begin{eqnarray}
&\mc{S}\subset \bigcup_{x\in\mc{C}^i}\B {r_x}{x}~\text{with}~\sum_{x\in\mc{C}^i}r_x^k\leq c(n)C_F(n),\label{induction1}\\
&\forall x\in\mc{C}^i, r_x\leq r~\text{or}~  \forall y\in \mc{S}\cap \B {2r_x}{x},\bar\Theta(y,r_x)\leq E-i\del.\label{induction2}
\end{eqnarray}
\subsubsection*{\it{ First step of induction}} This will follow by \lemref{covering2}.
\subsubsection*{\it{ Inductive step}} Assume now that  we have a covering which satisfies (\ref{induction1}) and (\ref{induction2}) for $i=j$. We leave the balls with property $r_x\leq r$ as they are and we use  a rescaled version of the  \lemref{covering2} to cover  again the balls centered at $\mc{C}_j$  which satisfy the drop condition $\bar\Theta(y,r_x)\leq E-j\del$ for all $y\in \B{2r_x}{x}\cap \mc{S}$.  By \lemref{covering2} we have 
\Aligns{\mc{S}\cap \B{r_x}{x} \subset \displaystyle{\bigcup_{y\in \mc{C} ^j_x} }\B {r_y} {y}~\text{and}~ \sum _{y\in \mc{C}_x^j} r^k_y\leq C_F(n)r_x^k,}
where for $y\in \mc{C}^j_x$ either  $r_y=r$ or 
\Aligns{\label{drop}\forall z\in \B {2r_y}{y}\cap \mc{S},~\bar\Theta(z,r_y/10)\leq E-(j+1)\del.}
For the latter case we again  cover again the ball $\B{r_y}{y}$ by the minimal set of balls of radius $\rho(n)r_y$, $\cur{B_ {\rho(n) r_y} {(z_y^l)}}_{l=1}^{c(n)}$.  We then get
\Aligns{ \mc{C}_{j+1}\subset \bigcup_{x\in \mc{C}^j}\bigcup_{y\in \mc{C}^j_x}\bigcup_{l=1}^{c(n)}{z_y^l}
}
and so we have 
\Aligns{\sum_{x\in \mc{C}^{j+1}} r_x^k\leq  \sum_{x\in \mc{C}^{j+1}} \sum_{y\in \mc{C}^j_x } r_y^k\leq (c(n)C_F(n))^{j+1}.}
\subsubsection*{\it{ Conclusion}} We continue the induction at most $\lfloor{E/\del}\rfloor+1$ steps. Then we will have 
$C'_\eps=(c(n)C_F(n))^{\lfloor{E/\del}\rfloor+1}$. The proof of (\ref{Minkowski2}) then follow by (\ref{Minkowski1}). \\

 We now prove $\mc{S}^k_\eps$ and $\mc{S}^k$ are rectifiable. 
\subsubsection*{\small \bf Proof  of Rectifiability} To prove that $\mc{S}^k_\eps$ is rectifiable we use \thmref{ReifenbergI} and \lemref{L2-approximation}. Fix $\mc{S}\subset \mc{S}^k_\eps$. For  each $\del>0$ there exists a subset $E_\del\subset \mc{S}$ with 
$\mc{H}^k(E_\del)\leq \del \mc{H}^k(\mc{S})$ such that $F_\del=\mc{S}\backslash E_\del$ is $k$-rectifiable. To see this first note that by monotonicity formula,  for every $\delta$  there exist  $\bar{r}$ and measurable subset $E\subset \mc{S}$ with the following property:
\begin{eqnarray} 
&\mc{H}^k(E)\leq \del \mc{H}^k(\mc{S})\label{prop1}\\
&\forall x\in F_\del=\mc{S}\backslash E_\del ,~ \bar\Theta(x,10\bar{r})-\bar\Theta(x,0)\leq \del.\label{prop2}
\end{eqnarray}
See \cite{Naber-Valtorta16} for the proof of this statement.   We cover $F_\del$ by balls $\B{\bar{r}}{x_i}$ and then on $F_\del\cap\B{\bar r}{x_i}$ we apply \lemref{L2-approximation}. This is possible because $F_\del\subset \mc{S}^k_\eps$ and in view of \eref{prop2}. For simplicity we renormalize the ball $\B{\bar r}{x_i}$ to the unit ball $\B 1 0$.  For all $x\in F_\del$ and $s\leq 1$  and $\mu=\mc{H}^k\lt|\rt. _{F_\del}$ we have
\Aligns{\pr{\beta_{2,\mu}^k(x,s)}^2\leq  C_Ls^{-k}\int_{\B {s}{x}} W_s( y) d\mu(y).}
We integrate the above and by the fact that $\mc{H}^k(\mc{S}\cap \B r x)\leq C_\eps r^k$ and for  $p\in \B 1 0$,  $s\leq r\leq 1$ we have 
\Aligns{\int_{\B r p}\pr{\beta_{2,\mu}^k(x,s)}^2 d\mu(x) &\leq { C_Ls^{-k} \int _{\B r p}\int_{\B {s}{x}} W_s( y) d\mu(y)d\mu(x)} \\
&  \leq C_L C_\eps \int _{\B {r+s} {p}} W_s(x)d\mu(x).\\
}
Integrating again we get
\Aligns{\int_{\B r p}\int _0^r\pr{\beta_{2,\mu}^k(x,s)}^2 d\mu(x) \frac{ds}{s} 
&  \leq C_L C_\eps  \int _{\B {2r} {p}} \bra{\bar\Theta(x,8r)-\bar\Theta(x,0)}d\mu(x)\\
&\leq  c(n)C_L C_\eps^2 \del (2r)^k.
}
Choosing  $\del\leq \frac{\del_R}{C_L C_\eps^2c(n)}$ prove the $k$-rectifiability of  $F_\del$. Sending $\del$ to zero we get the rectifiability of $\mc{S}_\eps^k$. Since $\mc{S}^k(u)=\bigcup_{i}\mc{S}^k_{1/i}(u)$ we conclude that
$\mc{S}^k(u)$ is rectifiable. 
\subsection{ Minimizing maps}
In this subsection we study the singularities of minimizing F-harmonic maps and  prove \thmref{minimizingG}.  first we recall the definition of minimizing F-harmonic maps. 
\bd
We say a  map $u\in H^1(\Omega,N)$ is minimizing F-harmonic map, if for any ball $\B {r} {p}\subset \Omega$ and for any  $w\in H^1(\B {r} {p},N)$ with $w\equiv u$ in a neighborhood of $\p \B {r} {p}$,
\Aligns{\int_{\B {r} {p}}F(x,u,\nab)\leq\int_{\B {r} {p}}F(x,w,\abs{\na w}^2).
}
\ed
Note that a minimizing F-harmonic map is a stationary F-harmonic map.  In what follows, we develop the quantitative version of the  $\eps$-regularity theorem, \thmref{epsilon-regularityG}, which combined with \thmref{mainG}  leads to the proof of \thmref{minimizingG}.
\subsubsection{Quantitative $\eps$-regularity}
First we define the regularity scale  $r_u(x)$ of a map $u: \Omega\to N$ at a point $x$, which measures how far $x$ is from the singular set of $u$.  Define $r_{0,u}(x)$ to be the maximum of $r>0$ such that $u$ is $C^{0,\alpha}$ on $\B r x$. 
\bd Define the regularity scale $r_u(x)$ by
\EQ{r_u(x)=\max\cur {0<r \leq r_{0,u}(x)~\lt|\rt.~ r^\alpha\sup_{p,q \in \B r x} \frac{\abs{u(p)-u(q)}}{\abs{p-q}^\alpha} },
}
where $\al$ is as in \thmref{epsilon-regularityG}. 
\ed
Note that $r_u(x)$ is scale invariant.  Now we are able to state the quantitative $\eps$-regularity theorem minimizing F-harmonic maps. Let
  \Aligns{
  H_\Lambda=\{u\in H^1(\B 3 0,N)~\text{such that} \int_{\B 3 0}F(x,u,\nab )<\Lambda \}.
  }
\bt\label{q-regularity}
Let $u\in H_\Lambda$ be a minimizing F-harmonic map. Then there exists $\eps(n,N,\Lambda,F)$ such that if  $\B r  p$ is not $(n-2,\eps)$-symmetric, then
\Aligns{
r_u(p)\geq r/2.
}
\et
The proof is similar to the proof of Theorem 2.4 in \cite{Cheeger-Naber13.2}. For the sake of completeness we  mention the steps of the proof as each step is an interesting result on its own.
\begin{proof}
\begin{step}\label{compactness}
Let $u_i\in H_\Lambda$ be minimizing F-harmonic maps. Then $u_i$ converges strongly in $ H^1(\B 3 0,N)$ to a map $u$ which is again minimizing F-harmonic map. 
\end{step}
Arguing as in the case of classical minimizing harmonic maps (see for example Lemma 1 Section 2.9 in \cite{Simon96})
we can show that $u$ is minimizing. The strong $L^2$ convergence will follow by similar argument as the one in 
Proposition 4.6 in \cite{Schoen-Uhlenbeck82}. 
\begin{step}\label{n-2-n} 
For all $\bar\eps$, there exists $\del(n,N,\Lambda,\bar\eps)$ such that if $\B r p\subset \B 3 0$ is $(n-2,\del)$-symmetric for the map $u\in H_\Lambda$, then  $\B r p $ is $(n,\bar\eps)$-symmetric.  Consequently we have 
\Aligns{\mc{S}^{n-3}_{\bar\eps,r}(u)\subset\mc{S}^{n-1}_{\del,r}(u). }
\end{step}
The proof is similar to the  proof of   Lemma 2.5 in \cite{Cheeger-Naber13.2}.
\begin{step}\label{reverse} Let $u\in H_\Lambda$ be a minimizing F-harmonic map. Then there exists $\eps_0(n,N,\Lambda,F)$, such that if 
there exists $c\in N$  with 
\Aligns{\fint_{\B r p} \abs{u-c}^2<\eps_0,}
then $u$ is in $C^{0,\al}(\B {r/2} {p})$.
\end{step}
This step follows  by a simple contradictory argument.\\

Now we are able to prove  \thmref{q-regularity}. We argue by contradiction. Suppose there exists a sequence of minimizing F-harmonic maps $u_i\in H_\Lambda$ such that $\B r p$ is $(n-2,\frac{1}{i})$-symmetric but $r_{u_i}(p)<r/2$. By Step~\ref{compactness} the sequence $u_i$ converges strongly to a minimizing  F-harmonic map $u$ in $H^1$  and the ball $\B r p$ is $(n-2,0)$-symmetric for $u$, and therefore by Step~\ref{n-2-n}, $\B r p$ is $(n,\eps_0)$-symmetric. Finally by Step~\ref{reverse},  $u\in C^{0,\al}(\B {r/2} {p})$ which is a contradiction.
\end{proof}
Now we are able to prove \thmref{minimizingG}.
\begin{proof}[Proof of \thmref{minimizingG}]
By \thmref{q-regularity} we know 
\Aligns{\mc{S}(u) \cap  \B 1 0\subset\mc{S}^{n-3}_{\eps}(u).}
Then by \thmref{mainG},
\Aligns{\vol\pr{\B r {\mc{S}(u)}\cap  \B 1 0 }\leq \vol\pr{\B r {\mc{S}^{n-3}_{\eps}(u)}}<C_\eps r^3,}
which shows that the (n-3)-Minkowski dimension, and therefore the (n-3)-Hausdorff dimension of $\mc{S}(u)\cap  \B 1 0$ is finite. 
\end{proof}
\appendix
\section{\label{appendix}Background for the proof of the \thmref{mainG}}
In this part we explain the background material for the proof of  \thmref{mainG} from \cite{Naber-Valtorta16} and for many details we refer the reader to \cite{Naber-Valtorta16}. Throughout this Appendix we assume that
\EQ{\label{condition-u}u~\text{is a stationary F-harmonic map}~\text{and satisfies}~\eref{properties-u}.}
Before we state the results in this section, we draw your attention to the  the following remark.
\br 
In Proposition 2.7(4) in \cite{Naber-Valtorta16}, the authors prove a so called unique continuation property. They use this property in the proof of 
technical results in \subref{quant-rigidity} to show
\EQ{\label{push-hom}\bar \Theta_u(x,r)-\bar \Theta_u(x,r/2)=0~\text{ if and only if}~\frac{\partial u}{\partial t}=0~ \text{for a.e.}~t\in(0,r].} 
One can avoid the unique continuation property and use 
\EQ{\tilde \Theta_u(x,r)=\int_0^\infty \bar\Theta_u(x,s) \psi(\frac{s}{r})\frac{ds}{r}}
instead of $\bar\Theta_u(x,r)$ in the proof of \thmref{mainG}.  Here $\psi$ is a test function in $C_0^\infty([0,1])$ which is equal to 1 on $\bra{\varepsilon,1-\varepsilon}$ for small enough $\varepsilon$, .
Note that 
\EQ{\frac{d}{dr} \tilde \Theta_u(x,r)=\int_0^\infty  \frac {d}{ds}\bar\Theta_u(x,s)\psi(\frac{s}{r})\frac{sds}{r}}
which shows that $\tilde \Theta_u(x,r)$ is also monotone in $r$.   Moreover we have 
\EQ{\label{push-hom2}\tilde \Theta_u(x,r)-\tilde \Theta_u(x,r/2)=0~\text{ if and only if}~\frac{\partial u}{\partial t}=0~ \text{for a.e.}~t\in(0,r].} 
In the proof of \thmref{mainG} and without loose of generality we assume $\bar\Theta$ satisfy \eref{push-hom}.
\er  
\subsection{Quantitative symmetry}\label{quant-rigidity}
Here we recall the adapted  version of  the  quantitative rigidity theorem and the cone splitting theorem (see \cite{Cheeger-Naber13.1}) in our context.  
\bp(Quantitative rigidity)\label{q-rigidity}
Let $u$ satisfies \eref{condition-u}. Then for every $\epsilon>0$, there exist $\delta_0(n,N,F,\Lambda,\eps)$ such that if for some $x$ in $B_1(0)$
\Aligns{
\bar\Theta_u(x,r)-\bar\Theta_u(x,\frac{r}{2})<\delta_0,
}
then $B_r(x)$ is $(0,\eps)$-symmetric.
\ep
See the proof of Proposition 4.1 in \cite{Naber-Valtorta16}.
This proposition says that if $\bar\Theta(x,\cdot)$ is sufficiently pinched on two consecutive scales (i.e. $\bar\Theta_u(x,r)-\bar\Theta_u(x,r/2)$ is small enough), then $B_r(x)$  will be $(0,\eps)$-symmetric. We call the point $x$ a pinched point for $\bar\Theta_u$.
The following definitions express the quantitative version of linear independence.
\bd
We say  that $\{x_i\}_{i=0}^k$, $x_i\in B_1(0)$, $\rho$-effectively span the $k$-dimensional affine subspace 
\Aligns{V=\mathrm{Span}\{x_1-x_0,\ldots,x_k-x_0\}}
if for all $i=1,\ldots,k$
\Aligns{x_i\notin B_{2\rho}( \mathrm{Span}\{x_1-x_0,\ldots,x_{i-1}-x_0\}).}
\ed
\bd
Given $K\subset B_1(0)$, we say $K$, $\rho$-effectively spans a $k$-dimensional subspace  if there exist $\{x_0,\ldots,x_k\}\subset K$ that $\rho$-effectively spans a 
$k$-dimensional subspace.
\ed
The following theorem is a generalization of \propref{q-rigidity} where we have $k+1$ distinct pinching points.
\bp(Cone splitting)\label{q-splitting}
Let $u$ satisfies \eref{condition-u}. Then for every $\eps,\rho>0$, there exist $\delta_1(n, N, F,\Lambda,\eps,\rho)$ such that if for some $\{x_i\}_{i=0}^k\subset B_r(x)$  with $x$ in $B_1(0)$ and  $0<r\leq 1$ we have 
\begin{enumerate}[a.]
\item $\{x_i\}_{i=0}^k$ $\rho$-effectively span a $k$-dimensional subspace $V$,
\item $\bar\Theta(x_i,r)-\bar\Theta(x_i,r/2)<\del_1$ for all $i$,
\end{enumerate}
then $B_r(x)$ is $(k,\eps)$-symmetric.
\ep
The proof is similar to that \propref{q-rigidity}.  See also Proposition 4.6 and its proof in \cite{Naber-Valtorta16}
\bp(Quantitative dimension reduction)\label{q-reduction}
Let $u$ satisfies \eref{condition-u}. For $\rho,\eps>0$ there exists $\delta_2(n, N, F,\Lambda,\eps,\rho)$ such that the following holds. Let 
\Aligns{H=\cur{y\in B_2(0)~\lt|\rt. ~\bar\Theta(y,1)-\bar\Theta(y,\rho)<\delta_2}.}
If $H$ is $\rho$-effectively spanned by a $k$-dimensional subspace $V$, then
\Aligns{\mc{S}^k_{\eps,\delta_2}(u)\subset B_{2\rho}(V).}
\ep
See also Proposition 4.7 in \cite{Naber-Valtorta16}.
The following theorem says that $\bar\Theta$ is almost constant on the pinched points
\bl\label{pinched}
Let $u$ satisfies \eref{condition-u}.  Assume $\bar\Theta(y,1)<E$ for all $y$ in $\B 1 0$. Then for $\rho,\eta>0$ there exists $\delta_3(n, N,F, \Lambda,\eta,\rho)$ such that the following holds. If
$$H=\cur{y\in B_1(0)~\lt|\rt. ~\bar\Theta(y,\rho)<E-\delta_3}.$$
is $\rho$-effectively spanned by a $k$-dimensional subspace $V$, then
\Aligns{\forall x\in V\cap\B 2 0 ,~ \bar\Theta(x,\rho)<E-\eta.}
Moreover if $k\geq n-1$, then $E\geq \eta$.
\el
See  Lemma  4.10 and its proof in \cite{Naber-Valtorta16}.
We also need the following technical  lemma  for the proof of  \lemref{covering1}.
\bl\label{technical}
Let $u$ satisfies \eref{condition-u}.  For $\rho,~\eps>0$ there exists $\del_4(n,N,F,\Lambda,\eps,\rho)$ such that if 
$$\bar\Theta(0,1)-\bar\Theta(0,1/2)<\del_4$$
and there exists $y$ in $\B 3 0$ such that 
\begin{enumerate}[a.]
\item $\bar\Theta(y,1)-\bar\Theta(y,1/2)<\del_4$,
\item for some $r\in[\rho,2]$, $\B r y$ is not $(k+1,\eps)$-symmetric.
\end{enumerate}
Then $\B r 0$ is not $(k+1,\eps/2)$-symmetric.
\el
\subsection{{Generalized Reifenberg theorem}}\label{Reifenberg-theorem} In this part  we recall two versions of Reifenberg's theorem from \cite{Naber-Valtorta17}.  First we define the Jones' $\beta_2$ number. 
\begin{defn}
Let $\mu$ be a non-negative Radon measure on $B_3(0)$. For any $r>0$ and $k\in N$, the $k$-dimensional Jone's
$\beta$ number, $\beta^k_{2,\mu}$ is defined to measure how close the  support of $\mu$ is to a $k$-dimensional affine subspace. More precisely 
\EQ{\beta^k_{2,\mu}(x,r)=\lt(r^{-2-k}\min_{L^k\subset \R^n}  \int_{\B r x} {d^2(y,L^k)}~d\mu(y)\rt)^{1/2}.}
Here $L^k$ denotes the set of $k$-dimensional affine subspaces of $\R^n$.
\end{defn}
Now we are ready to state the generalized Reifenberg's theorems.  See Theorem 3.3 in \cite{Naber-Valtorta17} for the proof.
\begin{thm} \label{ReifenbergI}There exist dimensional constants $\del_R(n)$ and $C_R(n)$ such that for every  
$\mc{H}^k$-measurable subset {$S\subset B_1(0)$} which satisfies
\EQ{\int_{\B r p}\int_0^r \lt(\beta^k_{2,\mu}(x,s)\rt)^2 \frac{ds}{s} d\mu(x)\leq \del_R(n) r^k}
for each $p \in \B 1 0$ and $r\leq 1$ and $\mu=\mc{H}^k |_S$  we have 
\EQ{\mc{H}^k(S)<C_R(n)r^k~\text{and}~ S~\text{is k-rectifiable}.}
\end{thm}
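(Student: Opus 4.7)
The plan is to build, at each dyadic scale $r_i = 2^{-i}$, a $k$-dimensional approximating set $T_i$ that lies in a neighborhood of $S$ and captures its structure, and then use the hypothesis on the integrated Jones $\beta_2$ numbers to show that the $T_i$ converge, with controlled $k$-volume, to a $k$-rectifiable set containing $S$. First I would fix a Vitali-type covering of $S$ at scale $r_i$ by balls $B_{r_i}(x_\alpha^i)$ with $x_\alpha^i \in S$ and bounded overlap. For each ball choose a $k$-plane $L_\alpha^i$ achieving (or nearly achieving) the minimum in the definition of $\beta^k_{2,\mu}(x_\alpha^i, r_i)$. Then, using a partition of unity subordinate to the cover, I would glue the affine graphs over $L_\alpha^i$ into a single smooth $k$-dimensional submanifold $T_i$ that lies within distance $\lesssim \beta^k_{2,\mu}(x,r_i)\,r_i$ of $S$ at each point of $B_{r_i}(x)$.

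The heart of the argument is a Jones-type inequality controlling the oscillation of these best planes across scales: for two overlapping balls $B_{r_i}(x_\alpha^i)$ and $B_{r_{i+1}}(x_\beta^{i+1})$, one shows
\begin{equation*}
d_H\bigl(L_\alpha^i \cap B, L_\beta^{i+1}\cap B\bigr)^2 \,\lesssim\, \beta^k_{2,\mu}(x_\alpha^i, r_i)^2 + \beta^k_{2,\mu}(x_\beta^{i+1}, r_{i+1})^2,
\end{equation*}
for a comparison ball $B$. Summing this telescopically in $i$, together with the hypothesis $\int_0^r (\beta^k_{2,\mu})^2\, ds/s \le \delta_R r^k$ (which, after interchanging $\mu$-integration and $s$-integration and using the Vitali overlap bound, gives an $\ell^2$ bound on the successive plane-tilts weighted by $\mu$), proves that the $T_i$ form a Cauchy sequence in Hausdorff distance on each ball, with limit $T_\infty$ a Lipschitz $k$-graph over a single plane whenever the sums are small enough. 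This is precisely where $\delta_R(n)$ needs to be chosen small: the total tilt must stay bounded by, say, $1/100$ so the graphs are uniformly Lipschitz.

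Once the $T_i$ are built, the measure bound $\mathcal{H}^k(S)\le C_R(n) r^k$ follows from a Jacobian comparison: writing $T_i$ as a graph over its tangent planes with Lipschitz constant controlled by the accumulated tilts, $\mathcal{H}^k(T_i\cap B_1) \lesssim \omega_k$, and because $S$ is contained in an arbitrarily small neighborhood of $T_i$ as $i\to\infty$, lower semicontinuity of $\mathcal{H}^k$ and the containment $\mu = \mathcal{H}^k\lfloor_S$ yield the desired bound. Rectifiability is then immediate: $S$ is covered, up to $\mathcal{H}^k$-null error, by the Lipschitz graphs $T_\infty$ assembled at each location, and a countable covering argument with balls $B_{r_j}(p_j)$ shrinking to every density point of $\mu$ produces countably many Lipschitz $k$-graphs whose union covers $S$ up to $\mathcal{H}^k$-measure zero.

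The main obstacle will be the plane-comparison estimate between scales. A naive triangle inequality in the Hausdorff metric loses the $L^2$ structure of the $\beta$-hypothesis and forces one to assume $L^1$-summability, which would yield only the classical Reifenberg theorem with weaker conclusions. Extracting the $L^2$-summable tilt bound requires a careful stability lemma: if $L, L'$ are both near-minimizers of the energy $\int d(\cdot, \cdot)^2\, d\mu$ on overlapping balls with non-degenerate $\mu$, then their angle is bounded in $L^2$ by the difference of their energies, provided $\mu$ spans $k$ independent directions at the relevant scale (a quantitative spanning lemma). Propagating such non-degeneracy down the dyadic tree, while keeping the global Lipschitz constant below a universal threshold so that the local graph pictures remain consistent, is the subtle technical content and the reason the proof is significantly harder than classical Reifenberg.
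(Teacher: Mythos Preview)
The paper does not prove this theorem at all: immediately after the statement it writes ``See Theorem 3.3 in \cite{Naber-Valtorta17} for the proof.'' So there is no in-paper argument to compare against, and your sketch should be measured against the Naber--Valtorta construction cited there.

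Your outline is broadly in the spirit of that construction: one does build, scale by scale, approximating $k$-dimensional manifolds by gluing best-fit planes with a partition of unity, and the crux is indeed a quantitative tilt estimate between best $L^2$-planes on overlapping balls at consecutive dyadic scales, together with a non-degeneracy (effective spanning) mechanism to make that estimate usable. You also correctly flag that an $L^1$-type triangle inequality would lose the $L^2$ structure of the hypothesis and degrade the conclusion.

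That said, two points in your sketch are loose enough that they would not go through as written. First, the passage ``$S$ is contained in an arbitrarily small neighborhood of $T_i$'' is not a consequence of the hypothesis: the $\beta_{2,\mu}^k$ numbers only control $L^2(\mu)$-distance to a plane, not $L^\infty$-distance, so one cannot directly trap $S$ near $T_i$ pointwise. In the Naber--Valtorta argument one instead constructs a sequence of maps $\sigma_i$ (not just sets $T_i$) and proves that the composition is bi-Lipschitz onto its image; the measure bound then comes from pushing $\mu$ forward under a bi-Lipschitz map to a $k$-plane, not from a containment-plus-semicontinuity argument. Second, the ``quantitative spanning'' input cannot be assumed and must be manufactured: at scales and locations where $\mu$ fails to $\rho$-effectively span $k$ directions, the best-plane comparison lemma is vacuous, and one has to handle those ``bad'' balls separately (in \cite{Naber-Valtorta17} this is done by an excess/deficit dichotomy and a separate packing estimate). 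Without that bookkeeping the inductive control of the Lipschitz constant does not close.
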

We also need the following discrete version of Reifenberg's theorem. Here we assume the set $S$ to be a discrete subset of $B_1(0)$ such that the balls  $\cur{\B{r_x/5}{x}}_{x\in S}$ are pairwise disjoint balls where $\B {r_x}{x}\subset \B 2 0$. 
  Define $$\mu=\omega_k{ \sum_{x\in S} {r^k_x}~\delta_x}.$$
\begin{thm} \label{ReifenbergII} There exist dimensional constant $\del_R(n)$ and $C_R(n)$
 such that  if $\mu$ satisfies 
\EQ{\int_{\B r p}\int_0^r\pr{\beta^k_{2,\mu}(x,s)}^2 \frac{ds}{s} d\mu(x)\leq \del_R(n) r^k}
for all $\B r p \subset \B 2 0$, then we have 
\EQ{\sum_{x\in S} r^k_x<C_R(n).}
\end{thm}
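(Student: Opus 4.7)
The approach is to mimic the proof of Theorem~\ref{ReifenbergI} in the discrete setting, constructing an approximating Lipschitz $k$-manifold whose Hausdorff measure we can both bound from above (by the $\beta_2$ Carleson condition) and compare from below against $\sum_{x \in S} r_x^k$ (via disjointness of the balls $\B{r_x/5}{x}$). The plan is to work at dyadic scales $r_j = 2^{-j}$ and, at each scale, produce a $k$-dimensional graph $T_j$ over a reference $k$-plane such that $\spt \mu$ is suitably close to $T_j$ on the balls that are still ``active'' at that scale. A point $x \in S$ becomes inactive once the scale drops below $r_x$: at that moment the cumulative error between $T_{j}$ and the true position of $x$ is frozen, and $x$ contributes one ``bead'' of radius $r_x$ that lies near $T_j$.

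First, at the coarsest scale I would fix an optimal $k$-plane $V_0$ minimizing the $\beta_2$ functional on $\B 2 0$ with respect to $\mu$, and set $T_0 = V_0 \cap \B 2 0$. Inductively, for each ball $\B{r_j}{p}$ meeting the support of $\mu$ containing at least one active point, I would choose an optimal $k$-plane $V_{p,j}$ and build $T_{j+1}$ by interpolating the graphs of the $V_{p,j}$ over $T_j$ using a smooth partition of unity subordinate to a Vitali cover at scale $r_j$. The key bilipschitz/graphicality estimates, which are the technical heart of the Naber--Valtorta construction, show
\[
\|T_{j+1} - T_j\|_{L^\infty(B_{r_j}(p))}^2 \lesssim r_j^{-k}\!\! \sum_{\B{r_j}{q} \text{ near } \B{r_j}{p}} r_j^{k} \, \beta^k_{2,\mu}(q, 8 r_j)^2,
\]
and an analogous bound on the Lipschitz constant increment. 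Summing over all scales and using the hypothesis $\int_0^r \beta_{2,\mu}^2(x,s)\,ds/s \cdot d\mu \leq \delta_R r^k$ together with the disjointness of the small balls (which gives the correct Carleson packing of $\mu$) yields uniform graphicality and $\mathcal{H}^k(T_j \cap \B 1 0) \leq C(n)$ along the whole sequence, provided $\delta_R$ is chosen small.

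Once the telescoping estimates are in place, I would stop the iteration pointwise at the first $j$ with $r_j \leq r_x$ for each $x \in S$, landing on a limiting $k$-rectifiable set $T_\infty$ of $\mathcal{H}^k$ measure at most $C(n)$. By graphicality, every $x \in S$ lies within distance $r_x/10$ of $T_\infty$ (assuming $\delta_R$ is small enough to absorb the constant from the telescope), so each disjoint ball $\B{r_x/10}{x}$ contains a ball of radius $\sim r_x$ in the tube $\B{r_x/10}{T_\infty}$. Because $T_\infty$ is $k$-rectifiable with controlled Hausdorff measure, a standard tube/packing estimate gives
\[
\sum_{x \in S} r_x^k \;\leq\; C(n)\, \mathcal{H}^k(T_\infty \cap \B 2 0) \;\leq\; C_R(n),
\]
which is the claim.

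The main obstacle is the inductive construction of $T_j$ and the verification that the graph and Lipschitz bounds are preserved at every scale: this is where the Carleson smallness of $\beta_2$ must be converted into uniform $L^\infty$ and $C^{0,1}$ control, and where the discrete stopping at scale $r_x$ has to be reconciled with the continuous dyadic iteration. Since this is precisely the Naber--Valtorta machinery from \cite{Naber-Valtorta17, Naber-Valtorta16} and no ingredient is specific to the F-harmonic setting, I would follow those references verbatim for the technical estimates, referring to the discussion preceding Theorem~\ref{ReifenbergII} in the appendix.
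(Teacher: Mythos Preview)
Your proposal is correct and aligns with the paper, which does not give its own proof but simply refers to Theorem~3.4 in \cite{Naber-Valtorta17}; the outline you sketch is precisely the Naber--Valtorta Lipschitz-graph construction underlying that result. Since nothing in this theorem is specific to the $F$-harmonic setting, citing \cite{Naber-Valtorta17} (as you ultimately do) is exactly what the paper does.
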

 See Theorem 3.4 in  \cite{Naber-Valtorta17} for the proof.
\subsection{$L^2$-approximation theorem}\label{L2-theorem}
In this subsection we state the $L^2$-approximation theorem which  together with a covering argument, are the main ingredients of the proof of \thmref{mainG}. This theorem controls the Jone's $\beta$ number 
from above by the averages of pinches on the ball $\B r x$.
\begin{thm}\label{L2-approximation}
Let $u$ satisfies  \eref{condition-u}.  Let $\B r x$ be a ball with $x \in \B 1 0$ and $r\in(0,1]$. For every $\eps>0$ there exists a constant $C_L(n,N,\Lambda,F,\eps)$ such that if $\B {8r} x$ is $(0,\del_3)$-symmetric but not $(k+1,\eps)$-symmetric then 
\EQ{\pr{\beta^k_{2,\mu}(x,r)}^2 \leq C_L r^{-k} \int_{\B r x} W_r(y) d\mu(y), }
where $\mu$ is a non-negative finite measure on $\B r x$ and 
\EQ{W_r(x)= 2\int_{A_{r,8r}(x)}s^{2-n}F_{\mf p}(x,u,\nab)\abs{\frac{\p u}{\p s}}^2 \dvol= \bar\Theta_{8r}(x)-\bar\Theta_r(x).} 
\end{thm}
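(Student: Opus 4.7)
The plan is to argue by contradiction via an inertia-tensor analysis combined with the cone-splitting Proposition~\ref{q-splitting}. If $\beta_{2,\mu}^k(x,r)^2$ exceeds $C_L\, r^{-k}\int_{B_r(x)} W_r(y)\,d\mu(y)$ for a sufficiently large constant $C_L$, then I will extract $k+2$ points in $B_r(x)$ at which the pinching $W_r$ is very small and which $\rho$-effectively span a $(k+1)$-plane, and apply cone splitting to conclude that $B_{8r}(x)$ is $(k+1,\epsilon)$-symmetric, contradicting the hypothesis. The auxiliary $(0,\delta_3)$-symmetry of $B_{8r}(x)$ is used to transfer cone-splitting conclusions between nearby scales in the spirit of Lemma~\ref{pinched}, so that symmetry produced at a scale $\sim r$ can be upgraded to symmetry at scale $8r$.

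Concretely, set $c = \mu(B_r(x))^{-1}\int_{B_r(x)} y\,d\mu(y)$ and introduce the symmetric positive semidefinite form $b(v,w) = \int_{B_r(x)} (v\cdot(y-c))(w\cdot(y-c))\,d\mu(y)$ with eigenvalues $\lambda_1 \geq \cdots \geq \lambda_n \geq 0$ and orthonormal eigenbasis $v_1,\dots,v_n$. The standard $L^2$-best-plane computation gives
\[
r^{k+2}\beta_{2,\mu}^k(x,r)^2 \;=\; \lambda_{k+1} + \cdots + \lambda_n \;\geq\; \lambda_{k+1},
\]
with optimizer $V = c + \operatorname{span}\{v_1,\dots,v_k\}$. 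Fix $\eta$ much smaller than the threshold $\delta_1$ of Proposition~\ref{q-splitting} (for the final parameter $\epsilon/2$ and an appropriate spanning $\rho\sim\beta$), and set $G = \{y\in B_r(x) : W_r(y) \leq \eta\}$. By Chebyshev, $\mu(B_r(x)\setminus G) \leq \eta^{-1}\int W_r\,d\mu \leq (\eta C_L)^{-1} r^k \beta_{2,\mu}^k(x,r)^2 \mu(B_r(x))$, so choosing $C_L$ large the complement of $G$ carries inertial contribution negligible compared to $\lambda_{k+1}$; in particular the restricted form $b|_G$ still has its $(k+1)$-st eigenvalue $\geq \tfrac{1}{2}\lambda_{k+1}$. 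I then greedily pick $y_0,y_1,\dots,y_{k+1}\in G$ that $\rho$-effectively span a $(k+1)$-dimensional affine subspace with $\rho \sim \beta_{2,\mu}^k(x,r)/\sqrt{n-k}$: at the $i$-th step the mass-concentration estimate forces the existence in $G$ of a point whose projection onto the orthogonal complement (in direction $v_{i+1}$) of the previously chosen span has magnitude $\gtrsim r\beta_{2,\mu}^k(x,r)$. Each $y_i\in G$ satisfies $\bar\Theta_u(y_i,8r)-\bar\Theta_u(y_i,r) \leq \eta$, so a pigeonhole across dyadic scales in $[r,8r]$ yields some $s_i\in[r,8r]$ with $\bar\Theta_u(y_i,s_i)-\bar\Theta_u(y_i,s_i/2) \leq \eta/3 \leq \delta_1$. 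Cone splitting, together with the monotonicity Theorem~\ref{monotonicity-thmm} to align scales and the $(0,\delta_3)$-symmetry hypothesis at scale $8r$, then yields $(k+1,\epsilon)$-symmetry of $B_{8r}(x)$, the desired contradiction.

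The main obstacle is this quantitative extraction: turning a large $(k+1)$-st eigenvalue of the inertia tensor into $k+2$ points of $G$ in $\rho$-general position with an explicit, polynomial dependence of $\rho$ on $\beta_{2,\mu}^k(x,r)$, while simultaneously keeping the mass lost to $B_r(x)\setminus G$ under control so that the restricted inertia form retains the needed lower eigenvalue bound. A secondary technical point is scale alignment: $W_r(y)$ controls only a dyadic sum over $[r,8r]$, not a single consecutive-scale pinch of $\bar\Theta_u(y,\cdot)$, so the pigeonhole and the transfer of cone-splitting symmetry across scales (via the $(0,\delta_3)$-symmetry hypothesis and quasi-monotonicity of $\bar\Theta_u$) must be carried out carefully enough that the final threshold $C_L$ depends only on $n, N, \Lambda, F, \epsilon$.
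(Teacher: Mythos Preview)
Your contradiction strategy via cone splitting has a genuine circularity that prevents the constants from closing. You pick the effectively spanning points $y_0,\dots,y_{k+1}\in G$ with separation $\rho\sim\beta_{2,\mu}^k(x,r)$, and then you want to apply Proposition~\ref{q-splitting} with that $\rho$. But the pinching threshold $\delta_1$ in Proposition~\ref{q-splitting} depends on $\rho$ (it comes from a compactness argument and degenerates as $\rho\to 0$), so your $\eta$ must satisfy $\eta\le\delta_1(\rho)\le\delta_1(c\,\beta_{2,\mu}^k(x,r))$. On the other hand, in your Chebyshev step you need $C_L$ large compared to $\eta^{-1}$ to make $\mu(B_r(x)\setminus G)$ negligible. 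Putting these together, $C_L$ is forced to depend on $\beta_{2,\mu}^k(x,r)$, which is exactly what is not allowed: there is no a priori lower bound on $\beta_{2,\mu}^k(x,r)$, and the inequality to be proved is homogeneous in $\mu$, so no normalization removes this dependence. The sentence ``so that the final threshold $C_L$ depends only on $n,N,\Lambda,F,\epsilon$'' is asserted but not supported by the argument.

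The paper's proof (by reference to Theorem~6.1 in \cite{Naber-Valtorta17}) avoids this by proceeding \emph{directly}, not by contradiction through cone splitting. One diagonalizes the same inertia form $b$ with eigenpairs $(\lambda_l,v_l)$, but then uses the stationary equation \eqref{stationary-harmonic2} (divergence-freeness of the stress-energy tensor) integrated against a suitable cutoff to obtain, for every $l$,
\[
\lambda_l \int_{A_{3r,4r}(x)} F_{\mathfrak p}(x,u,|\nabla u|^2)\,|\langle \nabla u, v_l\rangle|^2 \;\le\; C(n,F)\,r^{n}\int_{B_r(x)} W_r(y)\,d\mu(y).
\]
The hypothesis that $B_{8r}(x)$ is $(0,\delta_3)$-symmetric but not $(k+1,\epsilon)$-symmetric then enters only to give a \emph{fixed} lower bound $c(\epsilon)>0$ on the directional energy $\int_{A_{3r,4r}(x)}F_{\mathfrak p}|\langle\nabla u,v_l\rangle|^2$ for each $l\ge k+1$ (otherwise $u$ would be almost invariant in $k+1$ independent directions and hence $(k+1,\epsilon)$-symmetric). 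Dividing and summing over $l=k+1,\dots,n$ gives $r^{k+2}\beta_{2,\mu}^k(x,r)^2=\sum_{l>k}\lambda_l\le C_L\,r^{2}\int W_r\,d\mu$ with $C_L=C_L(n,N,\Lambda,F,\epsilon)$ independent of $\beta$. The key difference is that the non-symmetry hypothesis is used to produce a lower bound on a quantity (the directional energy of $u$) that does \emph{not} involve $\mu$ or $\beta$, so no circular dependence arises.
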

\begin{proof}
The proof is similar to that of  Theorem 6.1 in \cite{Naber-Valtorta17}. 
\end{proof}
Note that $\del_3$ in the above theorem is the same as $\delta_3$ in \lemref{pinched}. 
\subsection{Covering lemmas}\label{covering-lemmas}
In this subsection we discuss the two covering lemmas as in \cite{Naber-Valtorta16}.  In the first covering lemma we refine our covering by keeping to refine the cover of the so called good balls. In the second covering lemma we refine our cover by keeping to  refine the cover of so called bad balls. 
\begin{lem}[Covering lemma I] \label{covering1} Suppose  $u$ satisfies in \eref{condition-u}. Fix $\epsilon>0$ and $\rho\leq \rho(n)<100^{-1}$ and $r_0\in(0,1]$. There exist $\hat\del(n,N,F,\Lambda,\eps,\rho)$ and a dimensional constant $C_V(n)$  such that the following is true. Let 
\Aligns{\mc{S}\subset \skedr~\mbox{ and}~E=\sup_{x\in \B{2}{0}\cap \mc{S}} \bar\Theta(x,1).}
Assume  $E\leq \Lambda$. There exists a covering of $\mc{S}\cap \B1 0$ such that 
\Aligns{\mc{S}\cap B_1(0)\subset \displaystyle{\bigcup_{x\in \mc{C}} }\B {r_x} {x}~\text{with}~ r_x\geq r_0~\text{and} \sum _{x\in \mc{C}} r^k_x\leq C_V(n).}
Moreover for each $x\in \mc{C}$ one of the following is satisfied
\begin{enumerate}[a.]
\item $r_x=r_0$
\item The set $H_x=\cur{~y\in\mc{S}\cap \B {2r_x} x  ~\lt|\rt .~ \bar\Theta(y,\rho r_x/10 )>E-\hat\delta~}$ is contained in $B_{\rho r_x/5}(V_x)$ where $V_x$ is an affine subspace of dimension at most $k-1$.
\end{enumerate}
\end{lem}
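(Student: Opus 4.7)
My plan is to construct $\mc{C}$ by a top-down inductive refinement modeled on Lemma~5.1 of \cite{Naber-Valtorta16}, and then to bound the packing sum $\sum_{x\in\mc{C}} r_x^k$ not by direct counting (which would introduce an $r_0$-dependence) but via the discrete generalized Reifenberg theorem \thmref{ReifenbergII} applied to the packing measure $\mu=\omega_k\sum_{x\in\mc{C}} r_x^k\delta_x$. The needed Reifenberg hypothesis on $\mu$ will come from combining the $L^2$-approximation theorem \thmref{L2-approximation} with the monotonicity of $\bar\Theta$ from \thmref{monotonicity-thmm}.

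The construction will proceed as follows. Start with $\mc{C}^{(0)}=\{0\}$ at scale $1$. Given $\mc{C}^{(i)}$, classify each ball $\B{r_x}{x}$: if $r_x\le r_0$, freeze it as case (a); otherwise form $H_x=\cur{y\in\mc{S}\cap\B{2r_x}{x}~\lt|\rt.~\bar\Theta(y,\rho r_x/10)>E-\hat\delta}$ and inspect its effective span. If $H_x$ fits in the $\rho r_x/5$-neighborhood of a $(k-1)$-dimensional affine subspace, freeze as case (b). Otherwise $H_x$ must $\rho$-effectively span \emph{exactly} a $k$-dimensional affine subspace $W_x$: it spans at least $k$ dimensions by the failure of case (b), and it cannot span $k+1$, since $x\in\skedr$ prevents $\B{r_x}{x}$ from being $(k+1,\epsilon)$-symmetric while the quantitative cone splitting \propref{q-splitting} would force $(k+1,\epsilon)$-symmetry from $k+2$ $\rho$-effectively independent pinched points (provided $\hat\delta\le\delta_1(n,N,F,\Lambda,\epsilon,\rho)$). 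By the quantitative dimension reduction \propref{q-reduction} (with $\hat\delta\le\delta_2$), $\mc{S}\cap\B{r_x}{x}\subset B_{2\rho r_x}(W_x)$, so the intersection is covered by at most $c(n)\rho^{-k}$ balls of radius $\rho r_x$ centered on a maximal $\rho r_x$-packing of $W_x\cap\B{r_x}{x}$; these replace $\B{r_x}{x}$ in $\mc{C}^{(i+1)}$. The iteration terminates in at most $\lfloor\log(r_0^{-1})/\log(\rho^{-1})\rfloor$ stages, producing the final finite covering $\mc{C}$.

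For the packing bound I will verify the hypothesis of \thmref{ReifenbergII} on $\mu$. The use of maximal packings at each refinement step makes the balls $\cur{\B{r_x/5}{x}}_{x\in\mc{C}}$ pairwise disjoint. For each $y\in\mc{C}$ and $s\in(0,r_y]$, \propref{q-rigidity} (with $\hat\delta\le\delta_0$) renders $\B{8s}{y}$ $(0,\delta_3)$-symmetric, while $y\in\skedr$ forbids $(k+1,\epsilon)$-symmetry, so \thmref{L2-approximation} applies, yielding
\[
\pr{\beta^k_{2,\mu}(y,s)}^2\le C_L\,s^{-k}\int_{\B s y}W_s(z)\,d\mu(z),\qquad W_s(z)=\bar\Theta(z,8s)-\bar\Theta(z,s).
\]
Integrating in $s$ and $y$ over $\B r p\subset\B 2 0$ and swapping order of integration using monotonicity of $\bar\Theta$ from \thmref{monotonicity-thmm} produces
\[
\int_{\B r p}\int_0^r\pr{\beta^k_{2,\mu}(y,s)}^2\frac{ds}{s}\,d\mu(y)\le C_L\,c(n)\int_{\B{2r}{p}}\bra{\bar\Theta(z,8r)-\bar\Theta(z,0)}\,d\mu(z).
\]
A careful telescoping of the $\bar\Theta$-drops along the inductive refinement (each refinement step is triggered only when $\bar\Theta$ has dropped by at most $\hat\delta$ on the pinched set, and the total drop is bounded by $\Lambda$) will bound the right side by $\delta_R(n)r^k$ once $\hat\delta$ is chosen small in terms of $n,N,F,\Lambda,\epsilon,\rho$ and $\delta_R(n)$. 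Then \thmref{ReifenbergII} delivers $\sum_{x\in\mc{C}} r_x^k\le C_R(n)=:C_V(n)$.

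The main obstacle will be this last step: producing the Reifenberg smallness estimate on $\mu$ \emph{independently} of $r_0$ and $E$. The key difficulty is that the required smallness must hold at \emph{every} scale $r\le 1$ and every center $p\in\B 2 0$, yet the $\bar\Theta$-drop that controls it is a global quantity. The argument will require balancing $\hat\delta$ simultaneously against $\delta_0,\delta_1,\delta_2,\rho$ and $\delta_R(n)$, and amortizing the drop budget across all refinement levels. In \cite{Naber-Valtorta16,Naber-Valtorta17} this is handled by a two-scale induction where the Reifenberg hypothesis is stabilized throughout the inductive construction; the same scheme should transfer essentially unchanged in our setting, because the only ingredients used from the classical theory are the monotonicity of $\bar\Theta$ (replaced here by \thmref{monotonicity-thmm}), the $L^2$-approximation \thmref{L2-approximation}, and the quantitative symmetry tools of \subref{quant-rigidity}.
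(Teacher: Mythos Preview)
Your overall architecture---inductive refinement along a $k$-plane followed by the packing bound via \thmref{ReifenbergII} applied to $\mu=\omega_k\sum r_x^k\delta_x$---is exactly the paper's. But your verification of the Reifenberg hypothesis has a genuine gap.

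First, the scale range in your $\beta$-estimate is backwards. For $s\le r_y$ the balls $\B{r_x/5}{x}$ are disjoint, so $\B s y\cap\supp\mu=\{y\}$ and $\beta^k_{2,\mu}(y,s)=0$ trivially. The content is at scales $s\in[r_y,1/8]$, and it is there that \thmref{L2-approximation} must be invoked.

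Second, and more seriously, for $s\ge r_y$ you need both hypotheses of \thmref{L2-approximation}: $\B{8s}{y}$ is $(0,\delta_3)$-symmetric and $\B{8s}{y}$ is \emph{not} $(k+1,\epsilon)$-symmetric. You justify the latter by ``$y\in\skedr$'', but your centers $y$ lie on the packing of $W_x\cap\B{r_x}{x}$ and are \emph{not} in general points of $\mc{S}$. Likewise, for the former you need $\bar\Theta(y,8s)-\bar\Theta(y,4s)$ small, which would follow from $\bar\Theta(y,r_y)>E-\eta$, but nothing in your construction guarantees this at the new centers. The paper closes both gaps by carrying two extra inductive invariants through the refinement: for every $x\in\mc{C}^j$, (iv) $\bar\Theta(x,r^j_x)>E-\eta$ and (v) $\B s x$ is not $(k+1,\epsilon/2)$-symmetric for all $s\in[r^j_x,1]$. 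Invariant (iv) is propagated to the new centers on $V$ via \lemref{pinched} (pinching transfers to the entire $k$-plane), and invariant (v) is propagated via \lemref{technical}. Your sketch omits both lemmas, and without them the $L^2$-approximation cannot be applied at the covering centers.

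Finally, the ``careful telescoping'' you allude to is not a single estimate but a genuine induction in its own right: the paper establishes $\mu_{t_l}(\B{t_l}{x})\le C_R(n)t_l^k$ by induction on $l$ with $t_l=2^lr_0$, first proving a weak bound $\mu_{t_{l+1}}(\B{t_{l+1}}{x})\le c(n)C_R t_{l+1}^k$ by splitting into $\mu_{t_l}$ and the annular part, then using that weak bound inside the Fubini step to control $\int\beta^2$ and close via \thmref{ReifenbergII}. You acknowledge this at the end, but it is essential and should be built into your plan rather than deferred.
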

\br
Without loss of generality  we will consider $\rho=2^{-a}$ and $r_0=\rho^{\bar{j}}$ for $a, \bar{j}\in \N$.
\er
\begin{proof}[Proof of Covering Lemma I] The proof will follow by an inductive covering argument as follows. We will start with $\hat\del$ as in \propref{q-reduction} and then we will determine $\hat\del$ in the induction process. We assume our inductive covering at step $j$ satisfies the followings.
\EQ{\mc{S}\subset \bigcup_{x\in\mc{C}^j}\B {r^j_x} {x} =\bigcup_{x\in\mc{C}_b^j}\B {r^j_x} {x} \cup \bigcup_{x\in\mc{C}^j_g}\B {r^j_x} {x}}
The balls with centers in  $\mc{C}^j_b$ are called {\it{bad balls}} and the ones with  centers in  $\mc{C}^j_g$ are called {\it{good balls}}.
\begin{enumerate}[i.]
\item If $x\in \mc{C}_b^j$ then $r^j_x\geq \rho^j$ and $H_x=\cur{~y\in  \mc{S}\cap \B {2r^j_x} x  ~\lt|\rt .~ \bar\Theta(y,\rho r^j_x/10 )>E-\hat\delta~}$   is contained in $B_{\rho r^j_x/5}(V_x)$, where $V_x$ is a $k-1$-dimensional affine subspace.
\item  If $x\in \mc{C}_g^j$ then $r^j_x=\rho^j$ and $H_x=\cur{~y\in \mc{S}\cap \B {2r^j_x} x  ~\lt|\rt .~ \bar\Theta(y,\rho r^j_x/10 )>E-\hat\delta~}$   is contained in $B_{\rho r^j_x/5}(V_x)$, where $V_x$ is a $k$-dimensional affine subspace.
\item  For all $x\neq y$, $\B {r^j_x/5} x \cap \B {r^j_y/5} y=\emptyset$.
\item For all  $x\in \mc{C}^j$ we have $\bar\Theta(x,r^j_x)>E-\eta$.
\item For all $x\in \mc{C}^j$ and for all $s\in [r^j_x,1]$, $\B s x$ is not $(k+1,\epsilon/2)$-symmetric.
\end{enumerate}
\subsubsection*{\em First step of the Induction} Consider the ball $B_1(0)$.  Let 
\EQ{\label{pinched-points} H=\cur{y\in B_2(0)\cap\mc{S}~\lt|\rt.~ \bar\Theta(y,\rho/10)>E-\hat\del}.}
If there exists no $k$ dimensional subspace $V$ such that $H$ is contained in $\B {\rho/5}{V}$ then we call $B_1(0)$ a bad ball and we stop the induction process. Otherwise $B_1(0)$ is a good ball and by \propref{q-reduction}, for $\hat\delta\leq \delta_2$  
 \[\mc{S}^k_{\eps,\hat\delta r}(u)\cap \B 1 0\subset \B {\rho/5}{V}.\]
Now we cover $\B {\rho/5}{V}$ by balls $\cur{\B {\rho} x}_{x\in \mc{C}}$ such that 
\begin{enumerate}[i.]
\item $x\in V\cap \B 10$
\item if $x\neq y$, $\B {\rho/5} x\cap \B{\rho/5} y=\emptyset$.
\end{enumerate}
Then by \lemref{pinched} and for every $\eta$ there exists $\del_3(\eta)$ such that if $\hat\delta\leq{\min\cur{\del_2,\del_3(\eta)}}$  and for every $x\in \mc{C}$ we have 
\Aligns{\bar\Theta(x,\rho/10)>E-\eta.}
Next by \lemref{technical} and for $\rho=\hat\del r_0$ and  every $\eps$ there exists $\del_4(\rho,\eps)$ such that if  $\eta<\min\{\del_4,\del_2\}$ we get 
$\B {s}{x}$ for $s>\rho$ is not $(k+1,\eps/2)$-symmetric. Therefore we have propertes i-v for the first step. 
\subsubsection*{\em Inductive step} In this step we assume we have properties i-v for step $j$ and we prove it for step $j+1$. This is very similar to the first step and we refer the reader to \cite{Naber-Valtorta16}.  
\subsubsection*{\em Conclusion} We stop inductive covering  when $j=\bar{j}$ (recall $r_0=\rho^{\bar{j}}$).
\subsubsection*{\bf \small Volume estimate} Now we prove  the volume estimate
\EQ{
\sum _{x\in \mc{C}} r^k_x\leq C_V(n).
}
Define $$\mu=\omega_k\sum_{x\in\mc{C}} r^k_x \del_x.$$
Therefore it is enough to prove 
\EQ{\label{volume}\mu(\B r x)\leq C_V(n) r^k}
for $x\in \B 1 0$ and $r\leq 1$. 
The proof of (\ref{volume}) will be  based on an inductive  process as follows.  For all $t\in(0,1]$, set 
\Aligns{\mc{C}_t&=\{x\in\mc{C}~\lt|\rt. r_x\leq t\},\\
\mu_t&=\mu\lt|\rt._{\mc{C}_t}\leq \mu. 
}
First we have $\mu_1=\mu$.
For $t_l=2^l r_0\leq 1/8$ we show by induction on $l\geq 0$ that 
\EQ{\label{volume-induction}
\mu_{t_l}(\B {t_l} x)\leq C_R(n) t_l^k,
}
for the constant $C_R(n)$ as in \thmref{ReifenbergII}. We then cover  $B_1(0)$ with $c(n)$ balls of radius $1/8$. We put then $C_V(n)=c(n)C_R(n)$.

The first step of induction is clear since 
$$\mc{C}_{r_0}=\{x\in \mc{C}~\lt|\rt.~r_x=r_0\}$$ 
and they are at least $2r_0/5$ away from each other and therefore 
$$\mu_{r_0}(\B {r_0}{x})\leq c(n) r_0^k.$$

Now assume (\ref{volume-induction}) is true for $t\leq t_l$, we want to show (\ref{volume-induction}) for $t_{l+1}=2t_l$. First we show the following weaker estimate for $\mu_{t_{l+1}}$ and then we improve our estimate by use of $L^2$-approximation Theorem and Reifenberg Theorem. We claim
\EQ{\label{bad-estimate}
\mu_{t_{l+1}}(\B {t_{l+1}} x)\leq c(n) C_R(n) t^k_{l+1}.
}
To prove above we set
\Aligns{\mu_{t_{l+1}}=\mu_{{t_l}}+\tilde\mu_{t_{l+1}}
}
where $\tilde\mu_{t_{l+1}}=\mu \lt|\rt. _{\cur{x\in \mc{C}_{t_{l+1}}~ \lt|\rt.~ r_x>t_l }}$. Take a cover $\B {2t_l} x$ by balls $\cur{\B {t_l} {y_i}}$ such that 
$\cur{\B {t_{l}/2} {y_i}}$ are disjoint. There are $c(n)$ of such balls. Then 
$$
\mu_{t_l}\pr{\B {t_{l+1}} x}\leq \sum_i\mu_{t_l}\pr{ \B {t_l}{y_i}}\leq c(n)C_R(n) t_l^k.
$$
For  $\tilde{\mu}_{t_{l+1}}$ we have 
$$\tilde{\mu}_{t_{l+1}}(\B {t_{l+1}} {x})\leq c(n) t_{l+1}^k$$
since our covering balls $\cur{\B {r_x} x }_{x\in\mc{C}}$ are $2r_x/5$-away from each other. Therefore we have (\ref{bad-estimate}). 

Now we use our inductive assumption in (\ref{volume-induction}) and (\ref{bad-estimate}) and  \thmref{ReifenbergII},  \thmref{L2-approximation} to finish the proof of the volume estimate (\ref{volume}). Set $$\bar{\mu}=\mu_{t_{l+1}}\lt|\rt._{\B {t_{l+1}} x}.$$ 
We use \thmref{L2-approximation} to show
\EQ{\label{L-app}\pr{\beta^k_{2,\bar{\mu}}(y,s)}^2\leq C_Ls^{-k}\int_{\B s y} \hat{W}_s(q) d\bar{\mu}(q)}
for $y\in \supp{ \bar{\mu}}$ and  $s\in(0,1]$ where
\Aligns{ \hat{W}_s(q)=
\begin{cases}
0& s\leq r_q\\
W_s(q)& s>r_q.
\end{cases}
}

We show  (\ref{L-app}) first  for the case where $s\in [r_y,1/8]$. For every $y\in\supp{\bar{\mu}}$ and $s\in[r_y,1/8]$ we know that  
$B_{8s}(y)$   is not $(k+1,\eps/2)$-symmetric.  Moreover, $\bar\Theta(y,r_y)>E-\eta$, and therefore 
\Aligns{\bar\Theta(y,8s)-\bar\Theta(y,4s)<\eta.}
By choosing $\eta\leq \del_0$ where  in \propref{q-rigidity} we use $\eps=\del_3$, we conclude that  $B(y,8s)$ is $(0,\del_3)$-symmetric. Finally, for every $q\in \B s y$ and since $s>r_y$ we have $r_q<s$, and  (\ref{L-app}) follows for the case where $s\in [r_y,1/8]$. Inequality \eref{L-app} is straightforward for  $s\leq r_y$, since  we have  $\B s y \cap \supp{ \bar\mu}=\cur{y}$ and so  $\beta_{2,\bar{\mu}}(y,s)=0$.  

In order to finish our induction we use (\ref{L-app}) to show  that for $y\in \B{t_{l+1}}{x}$ and $r<t_{l+1}$,
\EQ{\label{Reif}\int_{\B r y} \int_0^r\pr{\beta^k_{2,\bar\mu}(z,s)}^2 \frac{ds}{s} ~d\bar\mu(z)\leq C_L c(n)C_R^2\eta r^k.
}
Then by \thmref{ReifenbergII} and for $\eta<\frac{\del_R}{C_L c(n){C_W}^2}$ we have
\Aligns{\mu_{t_{l+1}} \pr{\B {t_{l+1}} {x}}    \leq C_R(n) t^k_{l+1}.}
For the proof of (\ref{Reif}), by (\ref{L-app})  and for all $s\leq r$,
\EQ{\int_{\B r y}\pr{\beta^k_{2,\bar{\mu}}(z,s)}^2 d\bar{\mu}(z)\leq C_Ls^{-k}\int_{\B r y}\int_{\B s z} \hat{W}_s(q) d\bar{\mu}(q)d\bar{\mu}(z).}
By (\ref{bad-estimate}) for $s\leq t_{l+1}$, 
$$\bar{\mu}(\B s z)\leq \mu_{s}(\B {s}{z})\leq c(n)C_R s^k.$$
Thus we have 
\EQ{\int_{\B r y}\pr{\beta^k_{2,\bar{\mu}}(z,s)}^2 d\bar{\mu}(z)\leq c(n)C_R C_L\int_{\B {r+s} y}\hat{W}_s(z) d\bar{\mu}(z)}
and  
\EQ{\int_0^r\int_{\B r y}\pr{\beta^k_{2,\bar{\mu}}(z,s)}^2 d\bar{\mu}(z)\frac{ds}{s}&\leq c(n)C_R C_L\int_0^r\int_{\B {r+s} y}\hat{W}_s(z) d\bar{\mu}(z)\frac{ds}{s}\\
&=c(n)C_R C_L\int_{\B {r+s} y}\int_{r_z}^r\hat{W}_s(z)\frac{ds}{s} d\bar{\mu}(z)\\
&\leq c(n)C_R C_L \int_{\B {r+s} y}\int_{r_z}^{1/8}\hat{W}_s(z)\frac{ds}{s} d\bar{\mu}(z)\\
&\leq c(n)C_R C_L c\bra{\bar\Theta(y,1)-\bar\Theta(y,r_y)}\leq c(n)C_R C_L c\eta (2r)^k. 
}
Therefore if we choose $\eta\leq\frac{ \del_R}{c(n)C_R C_L c}$, then by \thmref{ReifenbergII}, we
get \eref{volume-induction}.
\end{proof}
In our second covering lemma we refine the covering of the  bad balls from Covering Lemma I.
\bl[Covering lemma II] \label{covering2}  Let $u$  satisfies  \eref{condition-u}. Fix $\epsilon>0$ and $r_0\in(0,1]$. There exist $\tilde{\del}=\tilde\del(n,N,\Lambda,F,\eps)$ and a dimensional constant $C_F(n)$  such that the following is true. Let 
\[\mc{S}\subset \sketdr~\mbox{ and}~E=\sup_{x\in \B{2}{0}\cap \mc{S}} \bar\Theta(x,1).\]
Assume  $E\leq \Lambda$. There exists a covering of $\mc{S}\cap \B1 0$ such that 
\Aligns{\mc{S}\cap B_1(0)\subset \displaystyle{\bigcup_{x\in \mc{C}} }\B {r_x} {x}~\text{with}~ r_x\geq r_0~\text{and} \sum _{x\in \mc{C}} r^k_x\leq C_F(n).}
Moreover for each $x\in \mc{C}$ one of the following is satisfied:
\begin{enumerate}[a.]
\item $r_x=r_0$,
\item or we have the following drop
\EQ{\label{drop}\forall y\in \B {2r_x}{x}\cap \mc{S},~\bar\Theta(y,r_x/10)\leq E-\tilde\del.}
\end{enumerate}
\el
\begin{proof}[Proof of Covering Lemma II] We will refine the covering of Lemma \ref{covering2} through an inductive process. At the step $j$ of our induction we have
\begin{enumerate}[i.]
\item  For all $j$
\EQ{\mc{S}\subset \bigcup_{x\in \mc{C}^{(j,r_0)}} \B {r_0}{x}\cup \bigcup_{x\in \mc{C}^{(j,f)}}\B{r_x}{x} \cup \bigcup_{x\in \mc{C}^{(j,b)}}\B{r_x}{x}.
}
\item For all ${x\in \mc{C}^{(j,r_0)}}$, $r_x=r_0$. On these balls condition $a$ of \lemref{covering2} is satisfied and we stop the inductive process.
\item For all ${x\in \mc{C}^{(j,f)}}$, and all $z\in \B{2r_x}{x}$ we have $\bar\Theta(z,r_x/10)\leq E-\tilde{\del}$. 
On these balls condition $b$ of \lemref{covering2} is satisfies  and we stop the inductive process.
\item  For all $x\in \mc{C}^{(j,b)}$, $r_0<r_x\leq\rho^j$ and neither condition a nor condition  b satisfies and we  continue our inductive process.
\item For some constant $C_F(n)$ we have 
\EQ{
\sum_{x\in \mc{C}^{(j,r_0)}\cup \mc{C}^{(j,f) }} r_x^k&\leq C_F(n) \sum_{l=1}^j 2^{-l}\\
\sum _{x\in \mc{C}^{(j,b)} }r_x^k&\leq 2^{-j}.
}
\end{enumerate}
\subsubsection*{\it First step of induction}
Consider  $\tdel\leq \hat\del$, where the exact value of $\tdel$ will be determined  during the proof.  Recall that from \lemref{covering1} that we have the following covering of $\mc{S}\subset \skedr$
\EQ{
\mc{S}\subset\bigcup_{x\in\mc{C}}\B{r}{x}=
\bigcup_{x\in\mc{C}_{r_0}}\B{r}{x}\cup\bigcup_{x\in\czp}\B{r}{x},
}
where
\EQ{ 
\cz=\cur{x\in \mc{C}~\lt|\rt. r_x=r_0} ~\text{and}~\czp=\cur{x\in \mc{C}\lt|\rt. r_x>r_0}
}
{and} $$\displaystyle\sum_{x\in\cz\cup\czp} r_x^k<C_V(n),$$ 
and for every $x\in \czp$ we have
\Aligns{
H_x=\cur{y\in\B{2r_x}x \cap \mc{S} ~\lt|\rt.~\bar\Theta(y,\rho r_x/10)>E-\hat{\delta}}\subset \B {\rho/5}{V_x}
}
for a subspace $V_x$ of dimension at most $k-1$. We include the balls $\{\B {r_x}{x}\}_{x\in \cz}$ in our final covering. In fact  we let
\Aligns{
 \mc{C}^{(1,r_0)}= \mc{C}_{r_0}.
}
For the balls $\cur{\B {r_x} {x}}_{x\in \czp}$ we use a finer cover as follows.

If $H_x=\emptyset$ then every point in $\B {r_x}{x}$ satisfies the drop condition (\ref{drop}) in \lemref{covering2}.   We cover $\B{r_x}{x}$ by balls of radius $\cur{\B{\rho r_x} {y}}_{\mc{C}_x^{1,f}}$. The number of these balls is bounded by a constant $c(n)\rho^{-n}$. 

If $H_x\neq\emptyset$ then
 \Aligns{
H_x\subset \B {\rho/5}{V_x}
}
where $V_x$ is a subspace with dimension  at most  k-1. Then $\B {r_x}{x}\backslash\B {\rho/5}{V_x}$ can be covered by balls of radius $\{\B {\rho r_x} {y}\}_{\mc{C}_x^{(1,f)}}$ as above. On balls $\{\B {\rho r_x} {y}\}_{\mc{C}_x^{(1,f)}}$ the energy drop condition (\ref{drop}) satisfied.  We cover 
$\B {\rho/5}{V_x}$ by  balls  $\{\B {\rho r_x} {y} \}$ and there are at most $c(n)\rho^{1-k}$ such balls. These balls either satisfy the stopping condition $\rho r_x=r_0$,  in which case we include them  in  $\{\B {\rho r_x} {y} \}_{\mc{C}_x^{(1,r_0)}}$,   or they satisfy $\rho r_x>r_0$ where  they need more refinement and we include them in $\{\B {\rho r_x} {y} \}_{\mc{C}_x^{(1,b)}}$ .  More precisely, \Aligns{
\mc{C}^{(1,f)}\subset\displaystyle{\bigcup_{x\in \czp  }} \mc{C}_x^{(1,f)}~\text{and}~
\sum_{z\in \mc{C}^{(1,f)}} r_z^k=\sum_{ x\in \czp}\sum_{y\in \mc{C}_x^{1,f}}(\rho r_x)^k\leq C_V(n)c(n)\rho^{k-n} ,}
\Aligns{
\mc{C}^{(1,b)}\subset\displaystyle{\bigcup_{x\in \czp,~\rho r_x>r_0  }} \mc{C}_x^{(1,b)}~\text{and}~
\sum_{z\in \mc{C}^{(1,b)}} r_z^k=\sum_{ x\in \czp}\sum_{y\in \mc{C}_x^{1,b}}(\rho r_x)^k\leq C_V(n)c(n)\rho, }
\Aligns{
\mc{C}^{(1,r_0)}=\mc{C}_{r_0}\cup\displaystyle{\bigcup_{x\in \czp,~\rho r_x=r_0  }} \mc{C}_x^{(1,r_0)}~\text{and}~
\sum_{z\in \mc{C}^{(1,r_0)}} r_z^k\leq  \sum_{x\in \mc{C}_{r_0}} r_x^k + \sum_{ x\in \czp}\sum_{y\in \mc{C}_x^{1,r_0}}r_0^k\leq C_V(n)+ C_V(n)c(n)\rho. }
So we choose $\rho=\rho(n)\leq \min\cur{100^{-1},\frac{1}{2} C^{-1}_V(n)c^{-1}(n) }$ and then we have 
\Aligns{\sum_{z\in \mc{C}^{(1,b)}} r_z^k&\leq 1/2,\\
\sum_{z\in \mc{C}^{(1,r_0)}\cup \mc{C}^{(1,f)} } r_z^k &\leq  C_F(n).}
Also for each $y$ in $\mc{C}^{(1,b)}$, we have $r_y<\rho$ and therefore the first step of our inductive covering satisfies the   conditions i-v above.  
\subsubsection*{\it Inductive step} In this step we assume that we have the  properties i-v for step j and we prove then for step j + 1. Basically we only refine the covering for the balls in  $\mc{C}^{(j,b)}$. This is very similar to the first step and we omit the details. See \cite{Naber-Valtorta16}. 
\subsubsection*{\it Conclusion} By property iv above $r_0<r_x\leq \rho^j$ for $x\in \mc{C}^{(j,b)}$. But there will be a step $\bar{j}$ such that $\rho^{\bar{j}}\leq r_0$ and therefore $\mc{C}^{(\bar{j},b)}=\emptyset$.  
\end{proof}

\bibliographystyle{alpha}
\end{document}